\definecolor{shadegray}{RGB}{128, 128, 128}
\definecolor{lightgray}{RGB}{200,200,200}
\theoremstyle{definition}
\newtheorem{thm}{Theorem}[section]
\newtheorem{prop}[thm]{Proposition}
\newtheorem{lem}[thm]{Lemma}
\newtheorem{quest}[thm]{Question}
\newtheorem*{proposition*}{Proposition}
\let\c@equation\c@thm
\numberwithin{equation}{section}
\date{}
\title{Invariants of Unimodular Quadratic Polynomial Poisson Algebras of Dimension 3}
\author{CHENGYUAN MA}
\address{Department of Mathematics, University of Washington}
\email{c9ma@uw.edu}
\let\oldtocsection=\tocsection
\let\oldtocsubsection=\tocsubsection
\let\oldtocsubsubsection=\tocsubsubsection 
\renewcommand{\tocsection}[2]{\hspace{0em}\oldtocsection{#1}{#2}}
\renewcommand{\tocsubsection}[2]{\hspace{1em}\oldtocsubsection{#1}{#2}}
\renewcommand{\tocsubsubsection}[2]{\hspace{2em}\oldtocsubsubsection{#1}{#2}}
\pgfplotsset{compat=1.18}
\begin{document}
\subjclass[2020]{17B63, 16R30}
\keywords{Poisson alegbra $\cdot$ Deformation Quantization $\cdot$ Invariant Subalgebra $\cdot$ Reflection $\cdot$ Rigidity}

\maketitle

\pagestyle{plain}

\setcounter{section}{-1}

\setstretch{1.2}

\begin{abstract}
Let $P = \Bbbk[x_1, x_2, x_3]$ be a unimodular quadratic Poisson algebra, with its Poisson bracket written as $\{x_i, x_j\} = \displaystyle{\sum_{k,l}c_{i,j}^{k,l}x_kx_l}$, $1 \leq i < j \leq 3$. Let $P_{\hbar}$ be the deformation quantization of $P$ constructed as follows: $P_{\hbar} = \Bbbk\langle y_1, y_2, y_3\rangle/([y_i,y_j]=\frac{\hbar}{2}\displaystyle{\sum_{k,l}}c_{i,j}^{k,l}(y_ky_l+y_ly_k))_{1 \leq i < j \leq 3}$. In this paper, we establish that $P$ and $P_{\hbar}$ possess identical graded automorphisms and reflections, and that taking invariant subalgebras and taking deformation quantizations are two commutative processes.
\end{abstract}

\addtocontents{toc}{\protect\setcounter{tocdepth}{0}}
\section{Introduction}
\addtocontents{toc}{\protect\setcounter{tocdepth}{1}}

Throughout $\Bbbk$ is an algebraically closed base field of characteristic 0. The classical Shephard-Todd-Chevalley Theorem states that if $G \subseteq \text{Aut}_{\text{gr}}(\Bbbk[x_1, \cdots, x_n])$ is a finite subgroup, then the invariant subalgebra $\Bbbk[x_1, \cdots, x_n]^{G}$ is regular if and only if $G$ is generated by (pseudo-)reflections \cite{ST} \cite{C}. Since the 70s, scholars have strived to recover a variant of the Shephard-Todd-Chevalley Theorem in non-commutative settings. Prominent examples include, but are certainly not limited to, universal enveloping algebra of semisimple Lie algebras and Weyl algebras in \cite{AP}, skew polynomial rings and quantum matrix algebras in \cite{KKZ2}, non-PI Sklyanin algebras of global dimension $\geq 3$ in \cite{KKZ}, down-up algebras in \cite{KKZ3}, semiclassical limits of specific Artin-Schelter regular algebras in \cite{GVW}. In this paper, our principal focus lies in extending \cite{Ma1}'s investigation of the Shephard-Todd-Chevalley Theorem concering unimodular quadratic Poisson structures on $\Bbbk[x_1, x_2, x_3]$. Our objective is to establish an analogous version of the theorem tailored to the standard deformation quantizations of the Poisson algebras studied by \cite{Ma1}. Prior to stating the main theorems of this paper, we shall begin by revisiting several fundamental concepts.

\smallskip

A $\Bbbk$-algebra $A$ is called \textit{Artin-Schelter regular} if 
\begin{enumerate}[label = (\arabic*)]
    \item $A$ is connected $\mathbb{N}$-graded: $A$ admits a $\Bbbk$-vector space decomposition $A = \displaystyle{\bigoplus_{n \in \mathbb{N}}A_n}$ such that $A_0 = \Bbbk$ and $A_{i}A_{j} \subseteq A_{i+j}$ for all $i, j \in \mathbb{N}$.

    \item $A$ has finite Gelfand-Kirillov dimension: $\dim_{\Bbbk}A_n$ has polynomial growth. 

    \item $A$ has finite global dimension $d$.

    \item $A$ is Gorenstein: $\text{Ext}_{A}^{i}(\Bbbk, A) \cong \begin{cases}
        0 & i \neq d\\
        \Bbbk(l) & i = d
    \end{cases}$, for some $l \in \mathbb{Z}$. 
\end{enumerate}

Let $A$ be an Artin-Schelter regular algebra and let $\phi$ be a graded automorphism of $A$. The \textit{trace series} of $\phi$ is $\text{Tr}_{A}(\phi,t) = \displaystyle{\sum_{i = 0}^{\infty}\text{tr}(\phi\big\vert_{A_i})t^i}$. If further $A$ has Hilbert series $h_{A}(t) = \displaystyle{\frac{1}{(1-t)^{n}f(t)}}$ for some $f(t)$ satisfying $f(1) \neq 0$, a \textit{quasi-reflection} of $A$ is a finite-order graded automorphism $\phi: A \to A$ such that $\text{Tr}_{A}(\phi,t) = \displaystyle{\frac{1}{(1-t)^{n-1}g(t)}}$ for some $g(t)$ satisfying $g(1) \neq 0$. We will write $\text{QR}(A)$ to denote the set of quasi-reflections of $A$. 

\smallskip

Artin-Schelter regular algebras, initially introduced in \cite{AS}, is an important class of non-commutative algebras possessing the ``appropriate" properties for transferring results from the realm of commutative algebras to the realm of non-commutative algebras. Artin-Schelter regular algebras are commonly regarded as the non-commutative counterparts of polynomial algebras because they possess growth conditions and homological conditions that are inherent to polynomial algebras. 

\smallskip

A \textit{Poisson algebra} is a commutative $\Bbbk$-algebra $P$ together with a Lie bracket $\{-,-\}: P^{\otimes 2} \to P$ satisfying the Leibniz rule: $\{a,bc\} = b\{a,c\} + \{a,b\}c$ for all $a, b, c \in P$. Henceforth, we will assume that $P = \Bbbk[x_1, \cdots, x_n]$ is a connected $\mathbb{N}$-graded Poisson algebra under the standard grading and we will write $\text{PAut}_{\text{gr}}(P)$ to denote its graded Poisson automorphism group. A \textit{Poisson reflection} of $P$ is a finite-order graded Poisson automorphism $\phi$ such that $\phi\big\vert_{P_1}$ has eigenvalues $\underbrace{1,\cdots,1}_{n-1}, \xi$, for some primitive $m$th root of unity $\xi$. We will write $\text{PR}(P)$ to denote the set of Poisson reflections of $P$. 

\smallskip

Poisson algebras originally emerged in classical mechanics and subsequently assumed a significant role in mathematical physics. In recent decades, Poisson algebras have also garnered attention in pure mathematics.One explanation for this heightened interest can be attributed to their approximity with Artin-Schelter regular algebras through deformation quantizations, semiclassical limits, and Poisson enveloping algebras. 
\[
\begin{tikzcd}
\text{Poisson Algebras }
\arrow[rrrrr, shift={(0,1)}, "\text{deformation quantizations}"] 
\arrow[rrrrr, shift={(0,-1)}, "\text{Poisson Enveloping Algebras}", swap]
& & & & & \text{ Artin-Schelter Regular Algebras}
\arrow[lllll, shift={(0,0.5)}, "\text{semiclassical limits}"]
\end{tikzcd}
\]

\smallskip

Let $A$ be a $\Bbbk[[\hbar]]$-algebra such that $A/\hbar A$ is a commutative algebra. Let $f, g \in A$, $(f + \hbar A)(g + \hbar A) = (g + \hbar A)(f + \hbar A)$ and therefore $[f, g] = fg - gf = \hbar\gamma(f, g)$ for some $\gamma(f, g) \in A$. Let $\overline{f}, \overline{g}$ be the projections of $f, g$ on $A/\hbar A$, respectively. Define $\{-,-\}: (A/\hbar A)^{\otimes 2} \to A/\hbar A$ as follows:
\[
\{\overline{f}, \overline{g}\} = \overline{\gamma(f,g)},
\]
Then $(A/\hbar A, \{-,-\})$ is a Poisson algebra, called the \textit{semiclassical limit} of $A$.

\smallskip

Let $P$ be a Poisson $\Bbbk$-algebra. A \textit{(graded) deformation quantization} of $P$ is a graded $\Bbbk[[\hbar]]$-algebra $P_{\hbar}$ satisfying the following conditions:
\begin{enumerate}[label = (\arabic*)]
    \item $P_{\hbar} \cong P[[\hbar]]$ as graded $\Bbbk[[\hbar]]$-modules.

    \item $P_{\hbar}/\hbar P_{\hbar} \cong P$ as graded $\Bbbk$-algebras. In particular, for all $f, g \in P_{\hbar}$, $(f + \hbar P_{\hbar})(g + \hbar P_{\hbar}) = (g + \hbar P_{\hbar})(f + \hbar P_{\hbar})$ and therefore $[f, g] = fg - gf = \hbar\gamma(f, g)$ for some $\gamma(f, g) \in P_{\hbar}$.

    \item Let $f, g \in P_{\hbar}$ and let $\overline{f}, \overline{g}$ be the projections of $f, g$ on $P_{\hbar}/\hbar P_{\hbar} \cong P$, respectively. Then $\{\overline{f}, \overline{g}\} = \overline{\gamma(f,g)}$.
\end{enumerate}

The Poisson algebra $P$ is called \textit{quantizable} if $P$ admits a deformation quantization $P_{\hbar}$. It is possible for a Poisson algebra to possess multiple non-isomorphic deformation quantizations; nevertheless, within the scope of this thesis, our primary focus lies on a variant of the deformation quantizations constructed by \cite{DML}.

\smallskip

Let $P = \Bbbk[x_1, x_2, x_3]$ be a quadratic Poisson algebra, with its Poisson structure written as $\{x_i, x_j\} = \displaystyle{\sum_{k,l}c_{i,j}^{k,l}x_kx_l}$, $1 \leq i < j \leq 3$. Donin and Makar-Limanov proved that $P$ is quantizable, admitting the following deformation quantization \cite{DML}:
\[
P_{\hbar} = \Bbbk\langle y_1, y_2, y_3\rangle/([y_i,y_j]=\frac{\hbar}{2}\sum_{k,l}c_{i,j}^{k,l}(y_ky_l+y_ly_k))_{1 \leq i,j \leq 3},
\]
for some $0 \neq \hbar \in \Bbbk$. It is possible to prove that $P_{\hbar}$ is a quantum polynomial ring and $\text{PAut}_{\text{gr}}(P) \cong \text{Aut}_{\text{gr}}(P_{\hbar})$. If in addition $P$ is unimodular, then $\text{PR}(P) = \text{QR}(P_{\hbar})$.

\smallskip

The main theorems of this paper can be stated as follows:

\begin{thm}
Let $P = \Bbbk[x_1, x_2, x_3]$ be a unimodular quadratic Poisson algebra. Let $G$ be a finite subgroup of the graded Poisson automorphism group of $P$, and $G_{\hbar}$ be the corresponding finite subgroup of the graded automorphism group of $P_{\hbar}$ under the isomorphism $\text{PAut}_{\text{gr}}(P) \cong \text{Aut}_{\text{gr}}(P_{\hbar})$. The following are equivalent:
\begin{enumerate}[label = (\arabic*)]
    \item $G$ is generated by Poisson reflections.

    \item $G_{\hbar}$ is generated by quasi-reflections.

    \item $P_{\hbar}^{G_{\hbar}}$ is Artin-Schelter regular.
\end{enumerate}
\end{thm}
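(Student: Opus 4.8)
The plan is to reduce the entire non-commutative statement to the Poisson Shephard-Todd-Chevalley theorem already obtained in \cite{Ma1}, using the two structural identities recalled above together with the commutativity of quantization and invariant-taking announced in the abstract. Concretely, I would first dispose of $(1) \Leftrightarrow (2)$, which requires no genuine work, and then concentrate all effort on $(1) \Leftrightarrow (3)$.

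For $(1) \Leftrightarrow (2)$: under the isomorphism $\text{PAut}_{\text{gr}}(P) \cong \text{Aut}_{\text{gr}}(P_{\hbar})$ the group $G$ is carried to $G_{\hbar}$, and since $P$ is unimodular the identity $\text{PR}(P) = \text{QR}(P_{\hbar})$ says that an element of $G$ is a Poisson reflection exactly when its image in $G_{\hbar}$ is a quasi-reflection. Hence a generating set of $G$ by Poisson reflections transports to a generating set of $G_{\hbar}$ by quasi-reflections, and conversely; this is the whole content of the equivalence.

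For $(1) \Leftrightarrow (3)$ the engine is the commutativity isomorphism $P_{\hbar}^{G_{\hbar}} \cong (P^{G})_{\hbar}$, i.e. the invariant subalgebra of the quantization is itself a deformation quantization of the Poisson invariant subalgebra $P^{G}$. Granting this, I would argue in two steps. For $(1) \Rightarrow (3)$: by the Poisson Shephard-Todd-Chevalley theorem of \cite{Ma1}, if $G$ is generated by Poisson reflections then $P^{G}$ is again a graded polynomial Poisson algebra of the same unimodular quadratic type; its DML-style quantization $(P^{G})_{\hbar}$ is therefore again a quantum polynomial ring, hence Artin-Schelter regular, and so $P_{\hbar}^{G_{\hbar}} \cong (P^{G})_{\hbar}$ is Artin-Schelter regular. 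For $(3) \Rightarrow (1)$: if $P_{\hbar}^{G_{\hbar}} \cong (P^{G})_{\hbar}$ is Artin-Schelter regular, then reduction modulo $\hbar$ (equivalently, passage to the semiclassical limit) returns $P^{G}$ with its Hilbert series and global dimension inherited from the flat deformation, which forces $P^{G}$ to be Poisson-regular; \cite{Ma1} then yields that $G$ is generated by Poisson reflections. Combining with $(1) \Leftrightarrow (2)$ closes the cycle.

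The two points carrying the real weight are the commutativity isomorphism $P_{\hbar}^{G_{\hbar}} \cong (P^{G})_{\hbar}$ and the preservation of Artin-Schelter regularity across the quantization-versus-semiclassical-limit passage in both directions. The former requires checking that the $G_{\hbar}$-action respects the deformation so that forming invariants and forming the $\hbar$-deformation commute, and the latter requires that regularity of the commutative algebra $P^{G}$ lift to its quantization and, conversely, descend under reduction modulo $\hbar$; I expect establishing the commutativity isomorphism, with the attendant verification that the induced structure on $P^{G}$ stays within the unimodular quadratic class to which the DML construction applies, to be the main obstacle. The regularity transfer should then follow formally, since in this flat graded deformation the Hilbert series is unchanged and the Gorenstein and finite-global-dimension conditions are stable in both directions.
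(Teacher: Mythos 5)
Your handling of $(1)\Leftrightarrow(2)$ matches the paper exactly: it is immediate from the identification $\text{PAut}_{\text{gr}}(P)\cong\text{Aut}_{\text{gr}}(P_{\hbar})$ together with $\text{PR}(P)=\text{QR}(P_{\hbar})$ (Proposition 3.3). The problem is the engine you propose for $(1)\Leftrightarrow(3)$, namely the commutativity isomorphism $P_{\hbar}^{G_{\hbar}}\cong(P^{G})_{\hbar}$. That isomorphism is precisely the content of Theorem 0.2, which in this paper is proved \emph{after} Theorem 0.1 and \emph{uses} it (Lemma 5.4 invokes Theorem 4.1 together with \cite[Proposition 1.1]{S} to pin down the number of generators of $P_{\hbar}^{G_{\hbar}}$), so as structured your argument is circular. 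Worse, Theorem 0.2 is only established for $G$ generated by Poisson reflections --- the case-by-case construction of the elements $w_1,w_2,w_3$ starts from the classification of the eight possible reflection groups --- whereas your direction $(3)\Rightarrow(1)$ needs the commutativity isomorphism for an \emph{arbitrary} finite $G$ in order to take a semiclassical limit and conclude that $P^{G}$ is regular. No such statement is available, and there is no reason to expect invariants and quantization to commute for a non-reflection group.

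Two further steps are asserted rather than proved. First, in $(1)\Rightarrow(3)$ you claim $P^{G}$ is ``again of the same unimodular quadratic type,'' so that the DML construction applies; but $P^{G}$ is a polynomial ring on generators of varying degrees (e.g.\ $y_1^m,y_2^n,y_3^l$ in Case 3), so its Poisson structure is not quadratic for the standard grading and the DML quantization is not defined for it. Second, the claim that finite global dimension and the Gorenstein condition are ``stable in both directions'' across the quantization/semiclassical-limit passage is not a formal consequence of flatness --- specialization can degrade global dimension --- and in the paper's setup $\hbar$ has already been specialized to a scalar, so there is no ambient flat family to appeal to. The paper avoids all of this: it reduces $(2)\Leftrightarrow(3)$ via \cite[Lemma 1.10(c)]{KKZ} and \cite[Proposition 2.5]{KKZ2} to showing that $P_{\hbar}^{G_{\hbar}}$ has finite global dimension when $G_{\hbar}$ is generated by quasi-reflections, settles every abelian $G_{\hbar}$ by \cite[Theorem 5.3]{KKZ2}, and handles the single remaining non-abelian case ($\Omega_4$ with $G_{\hbar}\cong S_3$) by exhibiting $P_{\hbar}^{G_{\hbar}}$ explicitly as an iterated Ore extension via a Hilbert-series comparison. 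You would need to supply an independent, non-circular proof of the commutativity isomorphism for all finite $G$, plus genuine regularity-transfer arguments, before your route closes.
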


\begin{thm}
Let $P = \Bbbk[x_1, x_2, x_3]$ be a unimodular quadratic Poisson algebra. Let $G$ be a finite subgroup of the graded Poisson automorphism group of $P$ and let $G_{\hbar}$ be the corresponding finite subgroup of the graded automorphism group of $P_{\hbar}$ under the isomorphism $\text{PAut}_{\text{gr}}(P) \cong \text{Aut}_{\text{gr}}(P_{\hbar})$. Define $Q_{\hbar} := P_{\hbar}^{G_{\hbar}}$, with $\hbar$ viewed as a formal parameter (as opposed to a scalar value). Then 
\begin{enumerate}[label = (\arabic*)]
    \item $Q_{\hbar}/(\hbar) \cong P^{G}$ as $\Bbbk$-algebras.

    \item For all $f, g \in Q_{\hbar}$, $fg - gf = \hbar \pi_1(f,g)$ for some $\pi_1(f,g) \in Q_{\hbar}$.

    \item $Q_{\hbar}/(\hbar)$ together with the following Poisson bracket:
    \[
    \{\overline{f}, \overline{g}\} = \overline{\pi_1(f,g)},
    \]
    where $\overline{(\hspace{.1in})}$ denotes the image under the natural projection $Q_{\hbar} \to Q_{\hbar}/(\hbar)$, is isomorphic to $P^G$ as Poisson algebras. 
\end{enumerate}

It is worth noting that Theorem 0.1 is a variation of the Shephard-Todd-Chevalley Theorem that establishes a connection between unimodular quadratic Poisson structures on $\Bbbk[x_1, x_2, x_3]$ and their deformation quantizations in the sense of \cite{DML}, and Theorem 0.2 asserts that taking invariant subalgebras under finite reflection groups and taking deformation quantizations are two commutative processes:
\[
\begin{tikzcd}
\rotatebox{0}{$P$} \arrow{rrrrr}{\rotatebox{0}{deformation quantization}} \arrow[swap]{ddd}{\hspace*{-0.5cm}\rotatebox{90}{invariant}} & & & & & \rotatebox{0}{$P_{\hbar}$}
\arrow{ddd}{\hspace*{0.1cm}\rotatebox{270}{invariant}}\\
\\
\\
\rotatebox{0}{$P^{G}$} & & & & & \rotatebox{0}{$P_{\hbar}^{G_{\hbar}}$} 
\arrow{lllll}{\hspace*{0.2cm}\rotatebox{0}{semiclassical limit}}
\end{tikzcd}
\]
\end{thm}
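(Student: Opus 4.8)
The plan is to prove the three parts in the order (2), (1), (3): the central element $\pi_1(f,g)$ produced in (2) is exactly the data defining the bracket in (3), and the algebra isomorphism of (1) is what I will upgrade to a Poisson isomorphism in (3). Two structural facts will be used throughout. First, since $P_{\hbar} \cong P[[\hbar]]$ as graded $\Bbbk[[\hbar]]$-modules (axiom (1) of a deformation quantization), $P_{\hbar}$ is $\hbar$-torsion free and $\hbar$ is central. Second, the isomorphism $\text{PAut}_{\text{gr}}(P) \cong \text{Aut}_{\text{gr}}(P_{\hbar})$ is compatible with the reduction $\rho\colon P_{\hbar} \to P_{\hbar}/\hbar P_{\hbar} \cong P$, in the sense that every $\sigma \in G_{\hbar}$ is $\Bbbk[[\hbar]]$-linear, fixes $\hbar$, and satisfies $\rho(\sigma(f)) = \bar\sigma(\rho(f))$, where $\bar\sigma \in G$ denotes its image. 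Because $G$ is finite and $\mathrm{char}\,\Bbbk = 0$, I will also use the Reynolds operator (averaging over the group).

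For (2): given $f, g \in Q_{\hbar}$, the element $fg - gf$ is $G_{\hbar}$-invariant, since each $\sigma \in G_{\hbar}$ is an algebra automorphism with $\sigma(f) = f$ and $\sigma(g) = g$; hence $fg - gf \in Q_{\hbar}$. By axiom (2), $fg - gf = \hbar\,\gamma(f,g)$ for a \emph{unique} $\gamma(f,g) \in P_{\hbar}$, uniqueness coming from $\hbar$-torsion freeness. Applying $\sigma$ and using that $\sigma$ fixes $\hbar$ gives $\hbar\,\sigma(\gamma(f,g)) = \sigma(fg - gf) = fg - gf = \hbar\,\gamma(f,g)$, so $\sigma(\gamma(f,g)) = \gamma(f,g)$ by torsion freeness. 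Thus $\pi_1(f,g) := \gamma(f,g) \in Q_{\hbar}$, which proves (2).

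For (1): the compatibility of the group isomorphism with $\rho$ shows $\rho(Q_{\hbar}) \subseteq P^{G}$, giving a $\Bbbk$-algebra map $\bar\rho\colon Q_{\hbar} \to P^{G}$ with kernel $Q_{\hbar} \cap \hbar P_{\hbar}$. I claim this kernel equals $\hbar Q_{\hbar}$. The inclusion $\hbar Q_{\hbar} \subseteq Q_{\hbar} \cap \hbar P_{\hbar}$ is immediate; conversely, if $x = \hbar z \in Q_{\hbar}$ with $z \in P_{\hbar}$, then $\hbar z = \sigma(x) = \hbar\,\sigma(z)$ for every $\sigma \in G_{\hbar}$, whence $z = \sigma(z)$ by torsion freeness, so $z \in Q_{\hbar}$. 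Therefore $\bar\rho$ descends to an injection $Q_{\hbar}/(\hbar) \hookrightarrow P^{G}$. Surjectivity follows by averaging: given $p \in P^{G}$, lift it to any $\tilde p \in P_{\hbar}$ and set $\hat p = \frac{1}{|G_{\hbar}|}\sum_{\sigma \in G_{\hbar}}\sigma(\tilde p) \in Q_{\hbar}$; since $\rho$ intertwines the actions and $p$ is already $G$-invariant, $\rho(\hat p) = \frac{1}{|G|}\sum_{\bar\sigma \in G}\bar\sigma(p) = p$. Hence $Q_{\hbar}/(\hbar) \cong P^{G}$ as $\Bbbk$-algebras, via an isomorphism $\theta$ induced by $\rho$.

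For (3): I will show $\theta$ is Poisson, where the source carries the semiclassical bracket $\{\overline f, \overline g\} = \overline{\pi_1(f,g)}$ and the target carries the restriction of the bracket of $P$, which is well defined because $G$ acts by Poisson automorphisms, making $P^{G}$ a Poisson subalgebra. For $f, g \in Q_{\hbar}$ the element $\pi_1(f,g) = \gamma(f,g)$ is literally the same element computing the semiclassical limit of $P_{\hbar}$, so by axiom (3) its image is $\rho(\gamma(f,g)) = \{\rho(f), \rho(g)\}_{P}$; as $\theta$ is induced by $\rho$, this reads $\theta(\{\overline f, \overline g\}) = \{\theta(\overline f), \theta(\overline g)\}_{P}$, so $\theta$ is a Poisson isomorphism. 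The individual computations are short; the hard part will be not a calculation but the careful verification that the isomorphism $\text{PAut}_{\text{gr}}(P) \cong \text{Aut}_{\text{gr}}(P_{\hbar})$ is genuinely reduction-compatible (that $G_{\hbar}$ is $\Bbbk[[\hbar]]$-linear, fixes $\hbar$, and projects onto $G$), since this compatibility, combined with $\hbar$-torsion freeness, is precisely what drives the kernel identification $Q_{\hbar} \cap \hbar P_{\hbar} = \hbar Q_{\hbar}$ and the intertwining of brackets; without it none of the three parts would go through.
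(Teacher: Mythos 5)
Your proof is correct, but it follows a genuinely different route from the paper's. The paper proves this theorem by explicit presentation: for each of the eight admissible pairs $(P,G)$ it exhibits invariant generators $w_1,w_2,w_3$, computes their commutators, shows the resulting map $\Phi$ from a三-generator algebra onto $P_{\hbar}^{G_{\hbar}}$ is an isomorphism via injectivity/surjectivity lemmas plus a Molien-series count, and then reads off the semiclassical brackets of the $w_i$ and matches them against the classification of $P^{G}$ in \cite[Theorem 3.1]{Ma1} ``after suitable re-labelings.'' You instead give a uniform, presentation-free argument: the Reynolds operator together with $\hbar$-torsion-freeness of $P_{\hbar}\cong P[[\hbar]]$ yields $Q_{\hbar}\cap\hbar P_{\hbar}=\hbar Q_{\hbar}$ and the surjectivity of $Q_{\hbar}\to P^{G}$, and the reduction-compatibility of the group isomorphism (which does hold, by the construction in Lemma 2.1: $G_{\hbar}$ acts by matrices with entries in $\Bbbk$, so it is $\Bbbk[[\hbar]]$-linear and intertwines $\rho$) makes $\pi_1(f,g)$ the restriction of the $\gamma$ from the deformation-quantization axioms, so the Poisson compatibility in (3) is automatic. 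Your argument is shorter, avoids all case analysis, and in fact proves the statement for an arbitrary finite subgroup $G\subseteq\text{PAut}_{\text{gr}}(P)$, not just reflection groups; what it does not deliver is the explicit generators-and-relations description of $P_{\hbar}^{G_{\hbar}}$, which is the real payload of the paper's computation (it is also what feeds the Artin--Schelter regularity argument in Theorem 4.1). The one point you rightly flag as needing care --- that $G_{\hbar}$ fixes $\hbar$ and projects onto $G$ --- is exactly the load-bearing hypothesis, and it is supplied by the paper's Lemma 2.1; with that in hand your three steps are complete.
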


\smallskip

The structure of this paper is as follows: in Section 1, we review concepts that are relevant to this paper but have not been covered in the introduction; in Section 2 and Section 3, we establish a parallel between graded Poisson automorphism groups (resp. Poisson reflections) of unimodular quadratic Poisson structures on $\Bbbk[x_1, x_2, x_3]$ and their deformation quantizations in the sense of \cite{DML}; in Section 4, we prove Theorem 0.1; in Section 5, we prove Theorem 0.2; finally in Section 6, we give several remarks and a list of possible future research questions.

\

\section{Preliminaries}

Let $P = \Bbbk[x_1, x_2, x_3]$ be a Poisson algebra under the standard grading. 
\begin{itemize}
    \item The Poisson algebra $P$ is called \textit{quadratic} if $\{P_1, P_1\} \subseteq P_2$.  

    \item The Poisson algebra $P$ is called \textit{unimodular} if the Poisson structure $\{-,-\}$ has the following form:
    \[
    \{x_1, x_2\} = \frac{\partial \Omega}{\partial x_3}, \quad
    \{x_2, x_3\} = \frac{\partial \Omega}{\partial x_1}, \quad
    \{x_3, x_1\} = \frac{\partial \Omega}{\partial x_2},
    \]
for some homogeneous polynomial $\Omega$ of $x_1$, $x_2$, $x_3$.
\end{itemize}

Throughout the paper, we are primarily interested in the Poisson structures on $P$ that are both unimodular and quadratic. The classification of such Poisson structures, or equivalently, the classification of superpotential $\Omega$ can be located in a number of papers, for instrance \cite{BW}.
\begin{center}
\renewcommand{\arraystretch}{1.25}
\begin{tabular}{ |p{1cm}||p{3cm}||p{3.5cm}|p{3.5cm}|p{3.5cm}| } 
\hline
\centering \textbf{Case} & \centering $\boldsymbol{\Omega}$ & \centering $\boldsymbol{\{x_1,x_2\}}$ & \centering $\boldsymbol{\{x_2,x_3\}}$ & \centering $\boldsymbol{\{x_3,x_1\}}$ \tabularnewline
\hline
\hline
\centering 1 & \centering $x_1^3$ & \centering 0 & \centering $3x_1^2$ & \centering 0 \tabularnewline
\hline
\centering 2 & \centering $x_1^2x_2$ & \centering 0 & \centering $2x_1x_2$ & \centering $x_1^2$ \tabularnewline
\hline
\centering 3 & \centering $2x_1x_2x_3$ & \centering $2x_1x_2$ & \centering $2x_2x_3$ & \centering $2x_1x_3$\tabularnewline
\hline
\centering 4 & \centering $x_1^2x_2+x_1x_2^2$ & \centering $0$ & \centering $2x_1x_2+x_2^2$ & \centering $x_1^2+2x_1x_2$ \tabularnewline
\hline
\centering 5 & \centering $x_1^3 + x_2^2x_3$ & \centering $x_2^2$ & \centering $3x_1^2$ & \centering $2x_2x_3$ \tabularnewline
\hline
\centering 6 & \centering $x_1^3 + x_1^2x_3 + x_2^2x_3$ & \centering $x_1^2+x_2^2$ & \centering $3x_1^2+2x_1x_3$ & \centering $2x_2x_3$ \tabularnewline
\hline
\centering 7 & \centering $\frac{1}{3}(x_1^3+x_2^3+x_3^3)+\lambda x_1x_2x_3, \lambda^3 \neq -1$ & \centering $x_3^2+\lambda x_1x_2$ & \centering $x_1^2+\lambda x_2x_3$ & \centering $x_2^2+\lambda x_1x_3$ \tabularnewline
\hline
\centering 8 & \centering $x_1^3+x_1^2x_2+x_1x_2x_3$ & \centering $x_1x_2$ & \centering $3x_1^2+2x_1x_2+x_2x_3$ & \centering $x_1^2+x_1x_3$ \tabularnewline
\hline
\centering 9 & \centering $x_1^2x_3 + x_1x_2^2$ & \centering $x_1^2$ & \centering $2x_1x_3+x_2^2$ & \centering $2x_1x_2$ \tabularnewline
\hline
\end{tabular}
\end{center}

\medskip

Let $P = \Bbbk[x_1, x_2, x_3]$ be a quadratic Poisson algebra, with its Poisson structure written as $\{x_i, x_j\} = \displaystyle{\sum_{k,l}c_{i,j}^{k,l}x_kx_l}$, $1 \leq i < j \leq 3$. Donin and Makar-Limanov proved that $P$ is quantizable, admitting the following deformation quantization:
\[
P_{\hbar} = \Bbbk[[\hbar]]\langle y_1, y_2, y_3\rangle/([y_i,y_j]=\frac{\hbar}{2}\sum_{k,l}c_{i,j}^{k,l}(y_ky_l+y_ly_k))_{1 \leq i,j \leq 3.}
\]
In practice, we will work with a variant of the above deformation quantizations, modified by replacing the coefficient ring $\Bbbk[[\hbar]]$ with an algebraically closed field $\mathbb{F}$ of characteristic 0. In a more formal manner, we can replace $P_{\hbar}$ with $P_{\hbar} \otimes_{\Bbbk[[\hbar]]} \overline{\Bbbk((\hbar))}$, or equivalently, we can replace the coefficient ring $\Bbbk[[\hbar]]$ by $\mathbb{F} = \overline{\Bbbk((\hbar))}$, the Puiseux series of $\hbar$ over $\Bbbk$. Given that our attention rarely extends to scalars, we will assume that the formal parameter $\hbar$ has been replaced by a non-zero scalar in $\Bbbk$ and that $\mathbb{F} = \Bbbk$. 

\begin{thm}
\cite[Theorem 3.1]{DML} Let $P = \Bbbk[x_1, x_2, x_3]$ be a quadratic Poisson algebra, with its Poisson structure written as $\{x_i, x_j\} = \displaystyle{\sum_{k,l}c_{i,j}^{k,l}x_kx_l}$, $1 \leq i < j \leq 3$. Then the Poisson algebra $P$ is quantizable, admitting the standard deformation quantization:
\[
P_{\hbar} = \Bbbk\langle y_1, y_2, y_3\rangle/([y_i,y_j]=\frac{\hbar}{2}\sum_{k,l}c_{i,j}^{k,l}(y_ky_l+y_ly_k))_{1 \leq i,j \leq 3},
\]
for some $\hbar \in \Bbbk^{\times}$. 
\end{thm}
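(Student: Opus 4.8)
The plan is to verify directly the three defining conditions of a graded deformation quantization, treating $\hbar$ as a central formal parameter of degree $0$ so that the relations $[y_i,y_j]=\frac{\hbar}{2}\sum_{k,l}c_{i,j}^{k,l}(y_ky_l+y_ly_k)$ are homogeneous of degree $2$ in the $y_i$. Conditions (2) and (3) are the light ones. Setting $\hbar=0$ collapses each relation to $[y_i,y_j]=0$, so $P_\hbar/\hbar P_\hbar\cong\Bbbk[y_1,y_2,y_3]\cong P$ as graded $\Bbbk$-algebras, giving (2). For (3), the commutator is already of the form $[y_i,y_j]=\hbar\,\gamma(y_i,y_j)$ with $\gamma(y_i,y_j)=\frac12\sum_{k,l}c_{i,j}^{k,l}(y_ky_l+y_ly_k)$; projecting to $P$ and using commutativity of $P$ gives $\overline{\gamma(y_i,y_j)}=\sum_{k,l}c_{i,j}^{k,l}x_kx_l=\{x_i,x_j\}$. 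Since both the induced bracket and $\{-,-\}$ are biderivations determined by their values on generators, agreement on $y_1,y_2,y_3$ propagates to all of $P$, yielding (3).

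The substance of the theorem is condition (1): that $P_\hbar$ is free as a graded $\Bbbk[[\hbar]]$-module on the ordered monomials $y_1^{a}y_2^{b}y_3^{c}$, equivalently that $P_\hbar\cong P[[\hbar]]$. I would establish this with Bergman's Diamond Lemma. Fixing the order $y_1<y_2<y_3$, I read each defining relation as a rewriting rule $y_jy_i\mapsto y_iy_j+\frac{\hbar}{2}\sum_{k,l}c_{i,j}^{k,l}(y_ky_l+y_ly_k)$ for $j>i$. When the symmetrized right-hand side itself contains the descent $y_jy_i$, one moves it back to the left and solves linearly, which is legitimate because the relevant coefficient $1+\hbar(\cdots)$ is a unit in $\Bbbk[[\hbar]]$; this expresses every descent as a $\Bbbk[[\hbar]]$-combination of ordered monomials, so the rewriting system terminates. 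A short combinatorial check shows that among the three leading words $y_2y_1$, $y_3y_1$, $y_3y_2$ the only overlap ambiguity is the single word $y_3y_2y_1$, since no reduction rule begins with $y_1$ or ends with $y_3$.

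The crux, and the step I expect to be the main obstacle, is to verify that reducing $y_3y_2y_1$ along its two reduction paths, first resolving $y_3y_2$ or first resolving $y_2y_1$, yields the same ordered expression. I expect this confluence condition to be exactly the manifestation of the Jacobi identity for $\{-,-\}$, which holds by hypothesis (and which, for the unimodular structures of the classification, is automatic from the existence of the superpotential $\Omega$, a bracket of Nambu type in dimension $3$ always being Poisson). The computation is delicate because each reduction produces fresh quadratic terms that must themselves be reduced, generating corrections of higher order in $\hbar$; organizing the check by powers of $\hbar$ makes it tractable, with the lowest-order contributions cancelling by commutativity and the Leibniz rule, and the remaining obstruction vanishing precisely because $\{-,-\}$ satisfies Jacobi. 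Once this one ambiguity resolves, the Diamond Lemma furnishes the ordered monomials as a $\Bbbk[[\hbar]]$-basis, which is condition (1) and completes the proof. Alternatively one could run the confluence verification Poisson-structure by Poisson-structure across the nine cases of the classification table, but the uniform argument through the Jacobi identity is cleaner and avoids the case analysis.
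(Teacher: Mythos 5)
The paper does not prove this statement at all: it is imported verbatim as \cite[Theorem 3.1]{DML}, so there is no internal proof to compare against. (The closest the paper comes is Lemma 1.2, which verifies case by case that the nine explicit presentations in the table are iterated Ore extensions or three-dimensional Sklyanin algebras and quotes the PBW basis $\{y_1^iy_2^jy_3^k\}$ from \cite[page 254]{DML}; that is essentially your ``alternative'' case-by-case route, but run only for the unimodular list, not for arbitrary quadratic brackets.) So your proposal has to be judged on its own merits.

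Your Diamond Lemma strategy is the natural one, and the treatment of conditions (2) and (3) is fine \emph{granted} condition (1) --- note the mild circularity that without flatness you cannot yet know $P_\hbar/\hbar P_\hbar$ is the full polynomial ring rather than a proper quotient, though establishing the PBW basis afterwards repairs the ordering. The genuine gap is at the step you yourself flag as the crux: the confluence of the single overlap $y_3y_2y_1$. Resolving that word two ways and comparing, the component of the discrepancy in order $\hbar^1$ is the symmetrized Jacobiator and does vanish by the Jacobi identity; but because the right-hand sides of the rewriting rules are \emph{quadratic} in the $y_i$, each reduction spawns new descents whose re-reduction contributes terms in $\hbar^2, \hbar^3, \dots$, and the vanishing of these higher-order obstructions is not a consequence of the Jacobi identity. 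Indeed, for quadratic Poisson brackets in four or more variables the same symmetrized ansatz can fail to be flat even though Jacobi holds, so any correct argument must use three-dimensionality in an essential way --- and that is precisely the nontrivial content of Donin--Makar-Limanov's theorem. As written, your proof asserts the conclusion at the one point where the real work lives; either the higher-order confluence must be computed explicitly (exploiting $n=3$), or one should simply cite \cite{DML} as the paper does.
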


Working with the standard deformation quantization offers two benefits:
\begin{enumerate}[label = (\arabic*)]
    \item It is imperative to set $A_0 = \Bbbk$, as Artin-Schelter regularity necessitates a connected $\mathbb{N}$-grading. In the later part of this section, we will prove that the deformation quantization in Theorem 1.1 satisfies Artin-Schelter regularity. 

    \item It is computationally (and psychologically) easier to work with scalars from a field rather than with scalars from a formal power series ring.
\end{enumerate}

For the remainder of the paper, we will refer this deformation quantization as the \textit{standard deformation quantization} of $P$. The standard deformation quantization of a unimodular quadratic Poisson structures on $\Bbbk[x_1, x_2, x_3]$ is a free $\Bbbk$-alegrba $\Bbbk\langle y_1, y_2, y_3 \rangle$ subjecting to the following relations:
\begin{center}
\renewcommand{\arraystretch}{1.25}
\begin{tabular}{ |p{1.5cm}||p{4.25cm}|p{4.25cm}|p{4.25cm}| } 
\hline
\centering \textbf{Case} & \centering $\boldsymbol{y_2y_1=}$ & \centering $\boldsymbol{y_3y_2=}$ & \centering $\boldsymbol{y_3y_1=}$ \tabularnewline
\hline
\hline
\centering 1 & \centering $y_1y_2$ & \centering $y_2y_3 - 3\hbar y_1^2$ & \centering $y_1y_3$ \tabularnewline
\hline
\centering 2 & \centering $y_1y_2$ & \centering $y_2y_3 - 2\hbar y_1y_2$ & \centering $y_1y_3 + \hbar y_1^2$ \tabularnewline
\hline
\centering 3 & \centering $\frac{1-\hbar}{1+\hbar}y_1y_2$ & \centering $\frac{1-\hbar}{1+\hbar}y_2y_3$ & \centering $\frac{1+\hbar}{1-\hbar}y_1y_3$\tabularnewline
\hline
\centering 4 & \centering $y_1y_2$ & \centering $y_2y_3 - 2\hbar y_1y_2 - \hbar y_2^2$ & \centering $y_1y_3 + \hbar y_1^2 + 2\hbar y_1y_2$ \tabularnewline
\hline
\centering 5 & \centering $y_1y_2 - \hbar y_2^2$ & \centering $y_2y_3 - 3\hbar y_1^2$ & \centering $(y_1 + 2\hbar y_2)y_3 -3\hbar^2 y_1^2$ \tabularnewline
\hline
\centering 6 & \centering $y_1y_2 - \hbar y_1^2 - \hbar y_2^2$ & \centering $(-\frac{2\hbar}{1+\hbar^2}y_1 + \frac{1-\hbar^2}{1+\hbar^2}y_2)y_3-\frac{3\hbar}{1+\hbar^2}y_1^2$ & \centering $(\frac{1-\hbar^2}{1+\hbar^2}y_1+\frac{2\hbar}{1+\hbar^2}y_2)y_3-\frac{3\hbar^2}{1+\hbar^2}y_1^2$ \tabularnewline
\hline
\centering 7 & \centering $\frac{2-\lambda\hbar}{2+\lambda\hbar}y_1y_2-\frac{2\hbar}{2+\lambda\hbar}y_3^2$ & \centering $\frac{2-\lambda\hbar}{2+\lambda\hbar}y_2y_3-\frac{2\hbar}{2+\lambda\hbar}y_1^2$ & \centering $\frac{2+\lambda\hbar}{2-\lambda\hbar}y_1y_3+\frac{\hbar}{2-\lambda\hbar}y_2^2$ \tabularnewline
\hline
\centering 8 & \centering $\frac{2-\hbar}{2+\hbar}y_1y_2$ & \centering $\frac{2-\hbar}{2+\hbar}y_2y_3 - \frac{6\hbar}{2+\hbar}y_1^2-\frac{8\hbar}{(2+\hbar)^2}y_1y_2$ & \centering $\frac{2+\hbar}{2-\hbar}y_1y_3 + \frac{2\hbar}{2-\hbar}y_1^2$ \tabularnewline
\hline
\centering 9 & \centering $y_1y_2-\hbar y_1^2$ & \centering $(-2\hbar y_1 + y_2)y_3+\hbar^3y_1^2-2\hbar^2y_1y_2-\hbar y_2^2$ & \centering $y_1y_3 - \hbar^2y_1^2+2\hbar y_1y_2$ \tabularnewline
\hline
\end{tabular}
\end{center}

\begin{lem}
Let $P = \Bbbk[x_1, x_2, x_3]$ be a quadratic Poisson algebra. Then its standard deformation quantization $P_{\hbar}$ is a Noetherian Artin-Schelter regular domain of global dimension 3 with Hilbert series $h_{P_{\hbar}}(t) = \displaystyle{\frac{1}{(1-t)^3}}$. In other words, its standard deformation quantization is a quantum polynomial ring of global dimension 3.
\end{lem}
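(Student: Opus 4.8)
The plan is to treat $P_\hbar$ as a quadratic algebra on three degree-one generators $y_1,y_2,y_3$ subject to the three relations displayed in the table, and to extract all four Artin-Schelter conditions, together with the Noetherian and domain properties, from a single structural input: that the ordered monomials $\{y_1^a y_2^b y_3^c : a,b,c \ge 0\}$ form a $\Bbbk$-basis of $P_\hbar$. Granting this, the Hilbert series is immediately $h_{P_\hbar}(t) = \frac{1}{(1-t)^3}$ and $\mathrm{GKdim}\,P_\hbar = 3$, which settles conditions (1) and (2) of Artin-Schelter regularity.

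To obtain the basis I would apply Bergman's Diamond Lemma. Read left to right, each relation becomes a rewriting rule that replaces a descending product $y_j y_i$ (with $i<j$) by a $\Bbbk$-linear combination of ascending monomials; with respect to the degree-lexicographic order induced by $y_1 < y_2 < y_3$ the leading words are exactly $y_2y_1$, $y_3y_2$, $y_3y_1$, and the irreducible words are the ascending monomials $y_1^a y_2^b y_3^c$. Because all three leading words have length two, there are no inclusion ambiguities and the unique overlap ambiguity is the triple product $y_3 y_2 y_1$, so confluence of the whole system reduces to checking that reducing $y_3(y_2y_1)$ and $(y_3 y_2)y_1$ yields the same normal form in each of the nine cases. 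This single ambiguity resolving is precisely the associativity/flatness already guaranteed by Donin and Makar-Limanov in Theorem 1.1: their result asserts $P_\hbar \cong P[[\hbar]]$ as graded modules, which here is equivalent to confluence, so the PBW basis, and with it the Hilbert series, follows; one may alternatively verify the nine computations by hand straight from the table.

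Since the reduction system is confluent with all leading terms quadratic, the three defining relations form a quadratic Gröbner basis, so by Priddy's theorem $P_\hbar$ is Koszul. Consequently the Koszul complex is the minimal free resolution of the trivial module, and the numerics $\dim(P_\hbar)_1 = 3$ together with $h_{P_\hbar}(t) = (1-t)^{-3}$ force it to take the form $0 \to P_\hbar(-3) \to P_\hbar(-2)^{3} \to P_\hbar(-1)^{3} \to P_\hbar \to \Bbbk \to 0$, giving global dimension $3$ and hence condition (3). The Gorenstein condition (4), with $l = 3$, follows from the self-duality of this complex, equivalently from the quadratic dual $P_\hbar^{!}$ being Frobenius: its Hilbert series is $(1+t)^3$ with one-dimensional top piece, so $\mathrm{Ext}^i_{P_\hbar}(\Bbbk, P_\hbar)$ vanishes for $i \neq 3$ and equals $\Bbbk(3)$ for $i = 3$. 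Thus $P_\hbar$ is Artin-Schelter regular of global dimension $3$. Finally, the triangular shape of the relations exhibits $P_\hbar$ as an iterated Ore extension $\Bbbk[y_1][y_2;\sigma_2,\delta_2][y_3;\sigma_3,\delta_3]$, with each $\sigma_k,\delta_k$ read off the corresponding row, whence $P_\hbar$ is a Noetherian domain; alternatively this is automatic from Artin-Tate-Van den Bergh once regularity is known.

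I expect the main obstacle to be the Gorenstein/self-duality step: confirming that the Koszul resolution is symmetric, equivalently that $P_\hbar^{!}$ is Frobenius, is what pins down $\mathrm{Ext}^3 = \Bbbk(3)$ rather than some other module, and it is the one place where the explicit coefficients in the table genuinely enter rather than being absorbed into formal deformation bookkeeping. A cleaner but less self-contained alternative that sidesteps this is a flat-deformation argument: $P$ itself is Koszul and Artin-Schelter regular, $P_\hbar/\hbar P_\hbar \cong P$, and flatness over $\Bbbk[\hbar]$ lets the Koszul resolution of $\Bbbk$ over $P$ lift to one of the same shape over $P_\hbar$, transporting all of (1)--(4) from the polynomial ring; there the only thing to watch is that flatness persists after specializing the formal parameter $\hbar$ to a nonzero scalar.
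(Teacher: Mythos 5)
Your strategy works cleanly for eight of the nine cases, but it breaks down exactly where the paper has to do something different: Case 7, the three\nobreakdash-dimensional Sklyanin algebra. There the relation replacing $y_2y_1$ reads $y_2y_1=\frac{2-\lambda\hbar}{2+\lambda\hbar}y_1y_2-\frac{2\hbar}{2+\lambda\hbar}y_3^2$, so in the degree-lexicographic order with $y_1<y_2<y_3$ its leading word is $y_3^2$, not $y_2y_1$. The three leading words become $y_3^2$, $y_3y_2$, $y_3y_1$, whose irreducible words grow exponentially, so the quadratic relations cannot form a Gr\"obner basis with this order; by the cyclic symmetry of the Sklyanin relations (each one couples a mixed product with the square of the third variable) no reordering of the generators repairs this, and indeed the generic three-dimensional Sklyanin algebra is known not to admit a quadratic Gr\"obner basis at all. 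Hence your Diamond Lemma confluence check, the ``quadratic Gr\"obner basis $\Rightarrow$ Koszul'' step, and the concluding claim that ``the triangular shape of the relations exhibits $P_\hbar$ as an iterated Ore extension'' all fail in Case 7 --- the Sklyanin algebra is famously not an iterated Ore extension. Your fallback flat-deformation argument is also not airtight here, since it is precisely in the Sklyanin family that specializing parameters can destroy regularity (cf.\ the excluded locus $\lambda^3\neq-1$), so one must actually verify that the specialized member is a nondegenerate Sklyanin algebra.

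The paper's proof makes exactly this split: for Cases 1--6, 8, 9 it realizes $P_\hbar$ as an iterated Ore extension $\Bbbk[y_1][y_2;\sigma_2,\delta_2][y_3;\sigma_3,\delta_3]$ and quotes standard facts about Ore extensions, plus the PBW basis from Donin--Makar-Limanov for the Hilbert series; for Case 7 it invokes the Artin--Schelter/Artin--Tate--Van den Bergh theory of three-dimensional Sklyanin algebras. For the eight triangular cases your argument is essentially a self-contained version of the same idea (and your worry about the Gorenstein step is unfounded there, since AS--Gorensteinness passes up Ore extensions), but you need a separate citation or argument for the Sklyanin case to close the gap.
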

\begin{proof}
In Case 1 to 6, 8 and 9, the standard deformation quantization is an Ore extension of either the skew polynomial ring of two variables or the Jordan plane, and therefore takes the form of an iterated Ore extension $\Bbbk[y_1][y_2; \sigma_2, \delta_2][y_3; \sigma_3, \delta_3]$, where appropriate selections of graded automorphisms $\sigma_2$ and $\sigma_3$, along with $\sigma_2$-derivation $\delta_2$ and $\sigma_3$-derivation $\delta_3$, are made. This can be readily verified by employing the following fact: the graded automorphisms of the skew polynomial ring of two variables is $\left\{\begin{bmatrix}a & 0\\0 & b\end{bmatrix}: a, b \neq 0\right\}$ when the relation is written as $y_2y_1 - \lambda y_1y_2$ ($\lambda \neq 0, \pm 1$) and $\left\{\begin{bmatrix}a & b\\-b & a\end{bmatrix}: a, b \neq 0\right\}$ when the relation is written as $y_2y_1 - y_1y_2 + \lambda y_1^2 + \lambda y_2^2$ ($\lambda \neq 0, \pm 1$), and the graded automorphisms of the Jordan plane is $\left\{\begin{bmatrix}a & 0\\b & a\end{bmatrix}: a \neq 0\right\}$ when the relation is written as $y_2y_1 - y_1y_2 - \lambda y_1^2$ ($\lambda \neq 0$). These iterated Ore extensions are Noetherian Artin-Schelter regular domains, as shown in \cite[Proposition 2]{AST} and \cite[Corollary 2.7, Exercise 2O]{GW}. For the Hilbert series, the standard deformation quantization $P_{\hbar}$ admits a $\Bbbk$-linear basis $\{y_1^iy_2^jy_3^k: i, j, k \geq 0\}$ \cite[page 254]{DML}. Consequently, the Hilbert series $h_{P_{\hbar}}(t) = \displaystyle{\frac{1}{(1-t)^3}}$.

\smallskip

In Case 14-7, the standard deformation quantization is the three-dimensional Sklyanin algebra, a family of Artin-Schelter regular algebras initially studied in \cite{AS} and \cite{ATV}. The proof of the remaining properties: Noetherian, domain, and Hilbert series, can be located in a number of references, for example \cite[Theorem 2.14]{W2}.
\end{proof}

\

\section{Graded Automorphisms of Standard Deformation Quantizations}

In this section, our objective is to provide a classification of graded automorphisms for the standard deformation quantizations of quadratic Poisson structures on $\Bbbk[x_1, x_2, x_3]$.

\smallskip

Let $P = \Bbbk[x_1, x_2, x_3]$ be a quadratic Poisson algebra and $P_{\hbar}$ be its standard deformation quantization. Suppose that $\phi \in \text{Aut}_{\text{gr}}(P_{\hbar})$. The graded automorphism $\phi$ can be uniquely represented by a $3 \times 3$ invertible matrix $\begin{bmatrix}
a_{11} & a_{12} & a_{13}\\a_{21} & a_{22} & a_{23}\\ a_{31} & a_{32} & a_{33}
\end{bmatrix}$ for some $a_{ij} \in \Bbbk$, $1 \leq i, j \leq 3$, satisfying $\displaystyle{\phi(y_i) = \sum_{j=1}^{3}a_{ij}y_j}$, $1 \leq i \leq 3$ and the following compatibility conditions on $a_{ij}$'s:
\[
\frac{\hbar}{2}\sum_{k,l}c_{i,j}^{k,l}\phi(y_ky_l+y_ly_k) = \phi([y_i,y_j]), 1 \leq i, j \leq 3.
\]

Expand the RHS: 

\begin{align*}
\frac{\hbar}{2}\sum_{k,l}c_{i,j}^{k,l}\phi(y_ky_l+y_ly_k) =&
(a_{i1}a_{j2}-a_{i2}a_{j1})[y_1,y_2]\\
+&(a_{i2}a_{j3}-a_{i3}a_{j2})[y_2,y_3]\\
+&(a_{i3}a_{j1}-a_{i1}a_{j3})[y_3,y_1]\\
=& (a_{i1}a_{j2}-a_{i2}a_{j1})\frac{\hbar}{2}\sum_{k',l'}c_{1,2}^{k',l'}(y_{k'}y_{l'}+y_{l'}y_{k'})\\
+&(a_{i2}a_{j3}-a_{i3}a_{j2})\frac{\hbar}{2}\sum_{k',l'}c_{2,3}^{k',l'}(y_{k'}y_{l'}+y_{l'}y_{k'})\\
+&(a_{i3}a_{j1}-a_{i1}a_{j3})\frac{\hbar}{2}\sum_{k',l'}c_{3,1}^{k',l'}(y_{k'}y_{l'}+y_{l'}y_{k'}).
\end{align*}

Given that the coefficients of $y_ky_l$ and $y_ly_k$ on the LHS coincide, and that the coefficients of $y_{k'}y_{l'}$ and $y_{l'}y_{k'}$ on the RHS also coincide, we may reduce the equation to:
\begin{align*}
\sum_{k,l}c_{i,j}^{k,l}\phi(y_ky_l)
=& (a_{i1}a_{j2}-a_{i2}a_{j1})\sum_{k',l'}c_{1,2}^{k',l'}y_{k'}y_{l'}\\
+&(a_{i2}a_{j3}-a_{i3}a_{j2})\sum_{k',l'}c_{2,3}^{k',l'}y_{k'}y_{l'}\\
+&(a_{i3}a_{j1}-a_{i1}a_{j3})\sum_{k',l'}c_{3,1}^{k',l'}y_{k'}y_{l'}.
\end{align*}

In the following lemma, we state the equivalency of classifying the graded automorphisms of $P_{\hbar}$ and classifying the graded Poisson automorphisms of $P$.
\begin{lem}
Let $P = \Bbbk[x_1, x_2, x_3]$ be a quadratic Poisson algebra and let $P_{\hbar}$ be its standard deformation quantization. Then $\text{Aut}_{\text{gr}}(P_{\hbar}) \cong \text{PAut}_{\text{gr}}(P)$ as groups.
\end{lem}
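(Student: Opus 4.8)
The plan is to reduce the statement to a single computation in linear algebra on the degree-one piece $V := (P_{\hbar})_1 \cong P_1 = \Bbbk y_1 \oplus \Bbbk y_2 \oplus \Bbbk y_3$. Since both $P$ and $P_{\hbar}$ are connected graded and generated in degree one, every graded automorphism (resp. graded Poisson automorphism) is uniquely determined by its restriction to $V$, which is exactly the matrix $(a_{ij})$ appearing in the discussion above. Thus both $\text{Aut}_{\text{gr}}(P_{\hbar})$ and $\text{PAut}_{\text{gr}}(P)$ embed into $\text{GL}(V) \cong \text{GL}_3(\Bbbk)$, and on both sides composition of automorphisms corresponds to multiplication of these matrices in the same way. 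It therefore suffices to show that the two images coincide as subsets of $\text{GL}(V)$; the asserted isomorphism is then literally the identity on matrices.

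To identify the two images I would realize both structures through one and the same linear map. Over a characteristic-zero field $V \otimes V = S^2 V \oplus \Lambda^2 V$, and for $g \in \text{GL}(V)$ the operator $g \otimes g$ preserves this decomposition; write $G_S$ and $G_{\Lambda}$ for its restrictions to the symmetric and exterior squares. The quadratic relations of $P_{\hbar}$ span a three-dimensional subspace $R_{\hbar} \subseteq V \otimes V$ generated by $r_{ij} = \alpha_{ij} - \tfrac{\hbar}{2}B(\alpha_{ij})$, where $\alpha_{ij} = y_i \otimes y_j - y_j \otimes y_i$ and $B \colon \Lambda^2 V \to S^2 V$ is the linear map $\alpha_{ij} \mapsto \sum_{k,l} c_{i,j}^{k,l}(y_k \otimes y_l + y_l \otimes y_k)$. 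In other words $R_{\hbar}$ is the graph of $-\tfrac{\hbar}{2}B$. The crucial observation is that the \emph{same} map $B$ encodes the Poisson bracket of $P$, under the identification of $P_2$ with $S^2 V$ sending $x_k x_l$ to $y_k \otimes y_l + y_l \otimes y_k$.

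With this in place both membership conditions collapse to one intertwining relation. For a quadratic algebra, $g$ extends to a graded automorphism of $P_{\hbar}$ if and only if $(g \otimes g)(R_{\hbar}) = R_{\hbar}$, and since $g \otimes g$ is invertible with $\dim R_{\hbar} = 3$ it is enough that $(g \otimes g)(R_{\hbar}) \subseteq R_{\hbar}$. Because $R_{\hbar}$ is the graph of $-\tfrac{\hbar}{2}B$ and $g \otimes g$ respects $S^2 V \oplus \Lambda^2 V$, this holds precisely when $G_S B = B G_{\Lambda}$; the nonzero scalar $\tfrac{\hbar}{2}$ cancels, leaving an $\hbar$-free condition. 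On the Poisson side, $g$ preserves the bracket if and only if $\{g x_i, g x_j\} = g\{x_i, x_j\}$ for all $i,j$, which in the same coordinates reads $G_S B = B G_{\Lambda}$, the identical equation. Hence the two images in $\text{GL}(V)$ agree, giving $\text{Aut}_{\text{gr}}(P_{\hbar}) \cong \text{PAut}_{\text{gr}}(P)$.

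I expect the only genuine obstacle to be bookkeeping rather than mathematics: pinning down the identification of $P_2$ with $S^2 V$ and of the bracket with $B$ so that the symmetrization conventions ($y_k y_l + y_l y_k$ versus $x_k x_l$, together with the factor $\tfrac{\hbar}{2}$) match exactly, and confirming that $R_{\hbar}$ really is a graph over $\Lambda^2 V$, equivalently that the $\alpha_{ij}$ are independent so that $\dim R_{\hbar} = 3$. This is exactly the content of the reduced compatibility equation displayed above, in which the coefficients $a_{i1}a_{j2} - a_{i2}a_{j1}$, etc., are precisely the matrix entries of $G_{\Lambda}$; once the conventions are fixed, the coincidence of the two polynomial systems in the $a_{ij}$ is immediate and carries no residual dependence on $\hbar$.
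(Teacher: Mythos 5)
Your proposal is correct, and at its core it rests on the same observation as the paper's proof: the compatibility conditions on the matrix entries $a_{ij}$ for extending to a graded automorphism of $P_{\hbar}$ coincide, after cancelling the nonzero factor $\tfrac{\hbar}{2}$, with the conditions for preserving the Poisson bracket, because the coefficients $a_{i1}a_{j2}-a_{i2}a_{j1}$, etc., appear identically on both sides. The difference is in packaging. The paper expands $\phi([y_i,y_j])$ and $\tfrac{\hbar}{2}\sum c_{i,j}^{k,l}\phi(y_ky_l+y_ly_k)$ term by term and compares coefficients, then asserts that the resulting map $\phi_{\hbar}\mapsto\phi$ is a group homomorphism whose inverse ``can be constructed in a similar manner.'' You instead decompose $V\otimes V = S^2V\oplus\Lambda^2V$, realize the relation space $R_{\hbar}$ as the graph of $-\tfrac{\hbar}{2}B$, and reduce both membership conditions to the single intertwining identity $G_SB=BG_{\Lambda}$. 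This buys you three things the paper leaves implicit: a clean justification that a linear map on generators extends to an algebra automorphism exactly when $(g\otimes g)(R_{\hbar})=R_{\hbar}$ (using $\dim R_{\hbar}=3$, which holds since the $\Lambda^2$-components $\alpha_{ij}$ already span $\Lambda^2V$); the fact that the correspondence is the identity on matrices, so compatibility with composition and the construction of the inverse are automatic; and an explicitly $\hbar$-free criterion, which makes transparent why the isomorphism is independent of the chosen scalar $\hbar\neq 0$. The only point to nail down, as you note, is the normalization in identifying $P_2$ with $S^2V$ so that the same $B$ encodes both the quantized relations and the Poisson bracket; once that convention is fixed the two polynomial systems in the $a_{ij}$ coincide and the argument closes.
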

\begin{proof}
Let $\phi_{\hbar} \in \text{Aut}_{\text{gr}}(P_{\hbar})$, written as $\begin{bmatrix}
a_{11} & a_{12} & a_{13}\\a_{21} & a_{22} & a_{23}\\ a_{31} & a_{32} & a_{33}
\end{bmatrix}$ for some $a_{ij} \in \Bbbk$, $1 \leq i, j \leq 3$. Define $\phi: P \to P$ as follows: $\phi(x_i) = \displaystyle{\sum_{j=1}^{3}a_{ij}x_j}$, $1 \leq i \leq 3$. It is clear that invertibility of $\phi_{\hbar}$ is equivalent to the invertibility of $\phi$ and it remains to show that $\phi$ preserves the Poisson bracket. For $1 \leq i < j \leq 3$, 
\begin{align*}
\phi(\{x_i,x_j\}) =& (a_{i1}a_{j2}-a_{i2}a_{j1})\{x_1,x_2\}
+ (a_{i2}a_{j3}-a_{i3}a_{j2})\{x_2,x_3\}
+ (a_{i3}a_{j1}-a_{i1}a_{j3})\{x_3,x_1\}.
\end{align*}

Since $\{x_i, x_j\} = \displaystyle{\sum_{k,l}c_{i,j}^{k,l}x_kx_l}$, the LHS can be expanded into:

\begin{align*}
\displaystyle{\frac{1}{2}\sum_{k',l'}c_{i,j}^{k',l'}\phi(x_{k'}x_{l'})}
=& (a_{i1}a_{j2}-a_{i2}a_{j1})\{x_1,x_2\}\\
+& (a_{i2}a_{j3}-a_{i3}a_{j2})\{x_2,x_3\}\\
+& (a_{i3}a_{j1}-a_{i1}a_{j3})\{x_3,x_1\}\\
=& (a_{i1}a_{j2}-a_{i2}a_{j1})\displaystyle{\sum_{k',l'}c_{1,2}^{k',l'}x_{k'}x_{l'}}\\
+& (a_{i2}a_{j3}-a_{i3}a_{j2})\displaystyle{\sum_{k',l'}c_{2,3}^{k',l'}x_{k'}x_{l'}}\\
+& (a_{i3}a_{j1}-a_{i1}a_{j3})\displaystyle{\sum_{k',l'}c_{3,1}^{k',l'}x_{k'}x_{l'}}.
\end{align*}

Notice that this condition is precisely what makes $\phi_{\hbar}$ a graded homomorphism of $P_{\hbar}$ (by a factor of $\displaystyle{\frac{\hbar}{2}}$). Up to this point, we have defined a mapping $\text{Aut}_{\text{gr}}(P_{\hbar}) \to \text{PAut}_{\text{gr}}(P): \phi_{\hbar} \mapsto \phi$ that is apparently a group homomorphism. Its inverse can be constructed in a similar manner. 
\end{proof}

Lemma 2.1 states that the classification of $\text{Aut}_{\text{gr}}(P_{\hbar})$ is equivalent to the classification of $\text{PAut}_{\text{gr}}(P)$. In practices, the latter is preferred due to its simplicity (smaller basis), and a comprehensive classification has already been provided in \cite[Proposition 2.2]{Ma1}.

\begin{prop}
\cite[Proposition 2.2]{Ma1} Let $P = \Bbbk[x_1, x_2, x_3]$ be one of the quadratic Poisson algebras and let $P_{\hbar}$ be its standard deformation quantization. Then $\text{PAut}_{\text{gr}}(P) \cong \text{Aut}_{\text{gr}}(P_{\hbar})$ contains the following automorphisms:
\end{prop}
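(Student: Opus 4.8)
The plan is to transport the problem to the Poisson side via Lemma 2.1 and then exploit the unimodular structure through its superpotential $\Omega$. By Lemma 2.1 it suffices to classify $\text{PAut}_{\text{gr}}(P)$, and since a graded automorphism is determined by its action on $P_1 = \Bbbk x_1 \oplus \Bbbk x_2 \oplus \Bbbk x_3$, every $\phi \in \text{PAut}_{\text{gr}}(P)$ is recorded by an invertible matrix $A = (a_{ij}) \in \mathrm{GL}_3(\Bbbk)$ with $\phi(x_i) = \sum_j a_{ij}x_j$, matching the convention of the excerpt. The task is then to determine, for each of the nine rows of the table, exactly which $A$ arise from a Poisson automorphism; the containment asserted in the proposition is the ``easy'' half, namely that each listed matrix satisfies the criterion below.

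First I would establish the criterion that makes unimodularity pay off. Writing the bracket in Jacobian (Nambu) form
\[
\{f,g\} = \det\begin{pmatrix} \frac{\partial f}{\partial x_1} & \frac{\partial f}{\partial x_2} & \frac{\partial f}{\partial x_3}\\ \frac{\partial g}{\partial x_1} & \frac{\partial g}{\partial x_2} & \frac{\partial g}{\partial x_3}\\ \frac{\partial\Omega}{\partial x_1} & \frac{\partial\Omega}{\partial x_2} & \frac{\partial\Omega}{\partial x_3}\end{pmatrix},
\]
which reproduces the defining relations $\{x_1,x_2\}=\tfrac{\partial\Omega}{\partial x_3}$, $\{x_2,x_3\}=\tfrac{\partial\Omega}{\partial x_1}$, $\{x_3,x_1\}=\tfrac{\partial\Omega}{\partial x_2}$, I claim that $\phi$ preserves the bracket if and only if $\phi(\Omega)=\det(A)\,\Omega$. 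Using the chain rule $\nabla(\phi(f)) = A^{\mathsf T}\big((\nabla f)\circ\phi\big)$ together with the column-determinant identity $\det[A^{\mathsf T}p,\,A^{\mathsf T}q,\,w] = \det(A)\,\det[p,\,q,\,(A^{\mathsf T})^{-1}w]$, the Poisson condition checked on the three pairs of generators collapses to the single vector identity $\nabla\Omega(Ax) = \det(A)\,(A^{\mathsf T})^{-1}\nabla\Omega(x)$; multiplying by $A^{\mathsf T}$ this is equivalent to $\nabla(\phi(\Omega)) = \det(A)\,\nabla\Omega$, hence by Euler's identity (as $\Omega$ is homogeneous of degree $3$) to $\phi(\Omega)=\det(A)\,\Omega$. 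Thus $\phi\in\text{PAut}_{\text{gr}}(P)$ precisely when $\Omega$ is a relative (semi-)invariant of $A$ of weight $\det(A)$.

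With this criterion the classification becomes a purely computational problem: find all $A\in\mathrm{GL}_3(\Bbbk)$ solving $\phi(\Omega)=\det(A)\,\Omega$. I would substitute $x_i\mapsto\sum_j a_{ij}x_j$ into the explicit cubic $\Omega$ of each row of the table, expand, and equate the coefficients of the ten degree-$3$ monomials on the two sides, obtaining a system of polynomial equations in the nine entries $a_{ij}$ and the quantity $\det(A)$. Solving these systems produces the explicit matrix families recorded in the proposition, and Lemma 2.1 immediately transfers each solution to $\text{Aut}_{\text{gr}}(P_{\hbar})$, with the relations of the second table available as an optional sanity check.

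The main obstacle is completeness in the harder rows rather than any single conceptual difficulty. For the symmetric potentials (Cases 1--3) the equations decouple quickly, but for the asymmetric and generic potentials---particularly Case 7, the Sklyanin potential $\tfrac13(x_1^3+x_2^3+x_3^3)+\lambda x_1x_2x_3$, where $\lambda$ couples the pure cubes to the mixed term, and Cases 8--9, where several $a_{ij}$ enter a single monomial equation at once---the polynomial systems are genuinely entangled, and the delicate point is to argue that the solution set is \emph{exactly} the listed family and that no sporadic automorphism has been overlooked. I would manage this by reading off the coefficients of the pure monomials $x_i^3$ first to pin down the diagonal/permutation skeleton of $A$, then feeding that information back to force the off-diagonal entries, throughout keeping track of the characteristic-zero and $\lambda^3\neq -1$ genericity hypotheses that guarantee the relevant leading coefficients do not vanish.
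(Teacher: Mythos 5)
The paper does not actually prove this proposition: it is imported verbatim from \cite[Proposition 2.2]{Ma1}, and the only argument the paper supplies is Lemma 2.1, which reduces the classification of $\text{Aut}_{\text{gr}}(P_{\hbar})$ to that of $\text{PAut}_{\text{gr}}(P)$. Your proposal performs exactly that reduction and then goes on to reconstruct the classification itself, so it is doing strictly more work than the paper, and the work is sound. Your key criterion is correct: for the Jacobian bracket $\{f,g\}=\det[\nabla f,\nabla g,\nabla\Omega]$ one computes $\{\phi f,\phi g\}(x)=\det(A)\det[\nabla f(Ax),\nabla g(Ax),(A^{\mathsf T})^{-1}\nabla\Omega(x)]$ while $\phi(\{f,g\})(x)=\det[\nabla f(Ax),\nabla g(Ax),\nabla\Omega(Ax)]$, and testing on the generator pairs forces $A^{\mathsf T}\nabla\Omega(Ax)=\det(A)\,\nabla\Omega(x)$, i.e.\ $\phi(\Omega)=\det(A)\,\Omega$ by Euler's identity; a spot check (e.g.\ Case 1, where $\phi(x_1^3)=(\pm\sqrt{bf-ce})^3x_1^3$ and $\det A=(\pm\sqrt{bf-ce})^3$) confirms the listed families satisfy it. Two small remarks. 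First, the semi-invariance criterion uses unimodularity in an essential way, whereas the proposition as stated (and Lemma 2.1) covers arbitrary quadratic Poisson structures; your argument therefore proves the nine tabulated (unimodular) cases but is not the route one would take for a general quadratic bracket. Second, as you acknowledge, the proposition's literal claim is only a containment, and the burden of your approach sits entirely in the exhaustiveness of the coefficient-matching in Cases 7--9; your plan of pinning down the permutation/diagonal skeleton from the pure cubes first is the standard and workable way to close that, but it is computational content that neither this paper nor your sketch actually carries out.
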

\begin{center}
\renewcommand{\arraystretch}{1.25}
\begin{tabular}{ |p{1.5cm}||p{12.75cm}| } 
\hline
\centering \textbf{Case} & \centering $\boldsymbol{ \textbf{PAut}_{\textbf{gr}}(\textbf{P})} \boldsymbol{\cong } \textbf{Aut}_{\textbf{gr}}(\boldsymbol{P_{\hbar}})$
\tabularnewline
\hline
\hline
\centering 1 & \centering 
$\begin{bmatrix}
\pm \sqrt{bf-ce} & 0 & 0\\
a & b & c\\
d & e & f
\end{bmatrix}$ \tabularnewline
\hline
\centering 2 & \centering 
$\begin{bmatrix}
a & 0 & 0\\
0 & b & 0\\
c & d & a
\end{bmatrix}$
\tabularnewline
\hline
\centering 3 & \centering
$\begin{bmatrix}
a & 0 & 0\\
0 & b & 0\\
0 & 0 & c
\end{bmatrix}$, 
$\begin{bmatrix}
0 & a & 0\\
0 & 0 & b\\
c & 0 & 0
\end{bmatrix}$,
$\begin{bmatrix}
0 & 0 & a\\
b & 0 & 0\\
0 & c & 0
\end{bmatrix}$
\tabularnewline
\hline
\end{tabular}
\end{center}

\vspace{1.0cm}

\begin{center}
\renewcommand{\arraystretch}{1.25}
\begin{tabular}{ |p{1.5cm}||p{12.75cm}| } 
\hline
\centering \textbf{Case} & \centering $\boldsymbol{ \textbf{PAut}_{\textbf{gr}}(\textbf{P})} \boldsymbol{\cong } \textbf{Aut}_{\textbf{gr}}(\boldsymbol{P_{\hbar}})$
\tabularnewline
\hline
\hline
\centering 4 & \centering
$\begin{bmatrix}0 & a & 0\\-a & -a & 0\\b & c & a\end{bmatrix},
\begin{bmatrix}0 & -a & 0\\-a & 0 & 0\\b & c & a\end{bmatrix},
\begin{bmatrix}-a & 0 & 0\\a & a & 0\\b & c & a\end{bmatrix}$,\tabularnewline
& \centering $\begin{bmatrix}-a & -a & 0\\a & 0 & 0\\b & c & a\end{bmatrix},
\begin{bmatrix}a & a & 0\\0 & -a & 0\\b & c & a\end{bmatrix}, 
\begin{bmatrix}a & 0 & 0\\0 & a & 0\\b & c & a\end{bmatrix}$\tabularnewline
\hline
\centering 5 & \centering
$\begin{bmatrix}
a & 0 & 0\\
0 & a & 0\\
0 & 0 & a
\end{bmatrix}$
\tabularnewline
\hline
\centering 6 & \centering
$\begin{bmatrix}
a & 0 & 0\\
0 & a & 0\\
0 & 0 & a
\end{bmatrix}$,
$\begin{bmatrix}
-\frac{1}{2}a & \pm\frac{\sqrt{3}}{2}a & 0\\
\mp\frac{\sqrt{3}}{2}a & -\frac{1}{2}a & 0\\
\frac{9}{8}a & \mp\frac{3\sqrt{3}}{8}a & a
\end{bmatrix}$
\tabularnewline
\hline
\centering 7 & \centering
$\left(
\langle\begin{bmatrix}a & 0 & 0\\0 & a & 0\\0 & 0 & a\end{bmatrix}: a \neq 0\rangle \times \langle\begin{bmatrix}1 & 0 & 0\\0 & b & 0\\0 & 0 & b^2\end{bmatrix}: b = \xi_3, \xi_3^2\rangle
\right) \rtimes 
\langle\begin{bmatrix}0 & 1 & 0\\0 & 0 & 1\\1 & 0 & 0\end{bmatrix}\rangle$,
\tabularnewline
\hline
\centering 8 & \centering 
$\begin{bmatrix}
a & 0 & 0\\
0 & \frac{a^2}{b} & 0\\
b-a & 0 & b
\end{bmatrix}$
\tabularnewline
\hline
\centering 9 & \centering 
$\begin{bmatrix}
a & 0 & 0\\
b & a & 0\\
-\frac{b^2}{a} & -b & a
\end{bmatrix}$
\tabularnewline
\hline
\end{tabular}
\end{center}

\

\section{Quasi-Reflections of Deformation Quantizations}
In this section, our attention is centered on the classification of quasi-reflections for the standard deformation quantizations of unimodular Poisson structures on $\Bbbk[x_1, x_2, x_3]$. 

\smallskip

We begin by noting the following:
\begin{lem}
Let $P = \Bbbk[x_1, x_2, x_3]$ be a unimodular quadratic Poisson algebra and let $P_{\hbar}$ be its standard deformation quantization. Suppose that $\phi$ is a graded automorphism of $P_{\hbar}$. If $\phi$ satisfies the following conditions:
\begin{enumerate}[label=(\arabic*)]
    \item $\phi$ has the form $\phi\big\vert_{P_1} = 
    \begin{bmatrix}a_{11} & a_{12} & 0\\a_{21} & a_{22} & 0\\a_{31} & a_{32} & a_{33}\end{bmatrix}$,

    \item the coefficients of $y_1y_3$, $y_2y_3$, $y_3^2$ are equal to 0 in the commutator bracket $[y_1, y_2]$,

    \item the coefficient of $y_3^2$ are equal to 0 in the commutator brackets $[y_2, y_3]$ and $[y_3, y_1]$,
\end{enumerate}
and either of the following conditions:
\begin{enumerate}[label = (\arabic*)]    
    \item[(4)] the commutator bracket $[y_1, y_2]$ is equal to 0,

    \item[(4')] the entries $a_{12}$, $a_{21}$ are equal to 0,

    \item[(4'')] the entry $a_{12}$ is equal to 0 and the coefficient of $y_2^2$ in the commutator bracket $[y_1,y_2]$ is equal to 0, 
\end{enumerate}
then $\phi$ is a quasi-reflection if and only if $\phi$ is a classical reflection.
\end{lem}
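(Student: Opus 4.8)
The plan is to reduce the entire statement to a single trace-series computation. Concretely, I will show that under hypotheses (1)--(3) together with any one of (4), (4$'$), (4$''$), the trace series of $\phi$ on $P_{\hbar}$ factors as
\[
\mathrm{Tr}_{P_{\hbar}}(\phi, t) = \frac{1}{(1-\lambda_1 t)(1-\lambda_2 t)(1-\lambda_3 t)},
\]
where $\lambda_1, \lambda_2, \lambda_3$ are the eigenvalues of $\phi\vert_{P_1}$. Once this formula is in hand the lemma is immediate: since $h_{P_{\hbar}}(t) = (1-t)^{-3}$ by Lemma 1.2, the finite-order automorphism $\phi$ is a quasi-reflection exactly when the denominator $\prod_i (1-\lambda_i t)$ vanishes to order precisely $2$ at $t = 1$, i.e. exactly two of the $\lambda_i$ equal $1$ and the remaining eigenvalue is $\neq 1$; finiteness of the order then forces that eigenvalue to be a primitive root of unity, which is exactly the assertion that $\phi$ is a classical reflection.

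To obtain the factorization I would exploit the block shape in (1). Since $\phi(y_1), \phi(y_2) \in \mathrm{span}(y_1, y_2)$ and $\phi(y_3) = a_{31}y_1 + a_{32}y_2 + a_{33}y_3$, I assign the auxiliary degree $\deg_3(y_1) = \deg_3(y_2) = 0$, $\deg_3(y_3) = 1$ and let $F_p$ be the span of the PBW monomials $y_1^i y_2^j y_3^k$ (Lemma 1.2) with $k \le p$. Hypotheses (2) and (3) say precisely that $[y_1,y_2]$ has $\deg_3 = 0$ and that $[y_2,y_3]$, $[y_3,y_1]$ contain no $y_3^2$-term; consequently straightening any word into the PBW basis never raises the $y_3$-degree, so $F_\bullet$ is an algebra filtration, and $\phi$ preserves it because $\deg_3(\phi(y_r)) \le \deg_3(y_r)$ for each $r$. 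Passing to the associated graded, modulo $F_{p-1}$ one has $\phi(y_3)^p \equiv a_{33}^p\, y_3^p$, so the induced map on the $\deg_3 = p$ slice of $(P_{\hbar})_m$ is $a_{33}^p$ times the action of $\phi$ on $B_{m-p}$, where $B$ is the subalgebra generated by $y_1, y_2$ (closed by (2)) and $\phi\vert_B$ is the $2 \times 2$ block. Summing over $p$ gives
\[
\mathrm{Tr}_{P_{\hbar}}(\phi, t) = \frac{1}{1 - a_{33}t}\,\mathrm{Tr}_{B}(\phi\vert_B, t),
\]
with $a_{33}$ one eigenvalue of $\phi\vert_{P_1}$ and $\lambda_1, \lambda_2$ (the eigenvalues of the block) the other two.

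It then remains to evaluate $\mathrm{Tr}_{B}(\phi\vert_B, t)$, and this is exactly what conditions (4), (4$'$), (4$''$) are for, each covering one shape of the two-generator algebra. Under (4), $B = \Bbbk[y_1,y_2]$ is commutative and the trace on $B_n = S^n(B_1)$ is the complete homogeneous symmetric polynomial $h_n(\lambda_1, \lambda_2)$, so $\mathrm{Tr}_B = (1-\lambda_1 t)^{-1}(1-\lambda_2 t)^{-1}$. Under (4$'$) the block is diagonal, the PBW monomials $y_1^i y_2^j$ are eigenvectors of eigenvalue $a_{11}^i a_{22}^j$, and the trace is again $h_n(a_{11}, a_{22})$. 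Under (4$''$) the block is lower triangular ($a_{12} = 0$) and $[y_1,y_2]$ carries no $y_2^2$-term, so I would run the same filtration argument a second time, now by $\deg_2$: reordering cannot raise the number of $y_2$'s, $\phi\vert_B$ respects this filtration, and modulo lower $\deg_2$ one gets $y_1^i y_2^j \mapsto a_{11}^i a_{22}^j\, y_1^i y_2^j$, again giving $h_n(a_{11}, a_{22})$. In every case $\mathrm{Tr}_B(\phi\vert_B, t) = (1-\lambda_1 t)^{-1}(1-\lambda_2 t)^{-1}$, completing the factorization.

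The main obstacle is the noncommutativity of $P_{\hbar}$: a priori the diagonal entry of $\phi$ on a PBW monomial is a complicated polynomial in the $a_{ij}$ produced by the straightening relations, and it is only the filtration hypotheses (2), (3) (and the secondary filtration hidden in (4$''$)) that guarantee straightening never feeds back into higher $y_3$- (resp.\ $y_2$-) degree, leaving the diagonal contribution equal to the ``commutative'' value $a_{33}^p a_{11}^i a_{22}^j$. I would therefore devote the most care to verifying the non-increase of these auxiliary degrees against the explicit relations tabulated after Lemma 1.2, since that is precisely what forces the associated-graded trace to agree with the honest trace; the remaining steps---generating functions for symmetric polynomials and reading off the order of vanishing at $t = 1$---are routine.
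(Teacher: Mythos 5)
Your proposal is correct, and its core is the same as the paper's: both arguments hinge on showing that conditions (1)--(3) together with one of (4), (4$'$), (4$''$) force $\mathrm{Tr}_{P_{\hbar}}(\phi,t)=\det\bigl(I_3-t\,\phi\vert_{(P_{\hbar})_1}\bigr)^{-1}$, because straightening a word into the PBW basis can never recreate a lost $y_3$ (nor, under (4$''$), a lost $y_2$). Your filtration/associated-graded packaging of this step is a cleaner and more verifiable formalization of what the paper does by inspecting coefficients of $\phi(y_1^iy_2^jy_3^k)$ after lexicographic rearrangement, and the factorization $\mathrm{Tr}_{P_{\hbar}}(\phi,t)=(1-a_{33}t)^{-1}\mathrm{Tr}_B(\phi\vert_B,t)$ makes the bookkeeping transparent. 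Where you genuinely diverge is the endgame: the paper invokes \cite[Theorem 3.1]{KKZ} to say that a quasi-reflection of a quantum polynomial ring is either a classical reflection or a mystic reflection, and then rules out the mystic case because its trace series $\frac{1}{(1-t)(1+t^2)}$ is incompatible with the computed formula; you instead read the conclusion directly off the definition of quasi-reflection, using $h_{P_{\hbar}}(t)=(1-t)^{-3}$ to see that the denominator $\prod_i(1-\lambda_i t)$ vanishes to order exactly $2$ at $t=1$ precisely when two eigenvalues equal $1$ and the third is a nontrivial root of unity. Your route is more self-contained (it does not need the classical/mystic dichotomy at all once the trace formula is known), while the paper's citation situates the lemma inside the general KKZ framework that it reuses elsewhere (e.g.\ in Lemma 3.2); both are valid.
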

\begin{proof}
Fix a degree $d \geq 0$. Take an arbitrary basis element $y_1^iy_2^jy_3^k$ from $(P_{\hbar})_d$: $i + j + k = d$ \cite[page 254]{DML}. Apply $\phi$ to $y_1^iy_2^jy_3^k$:
\[
\phi(y_1^iy_2^jy_3^k) = (a_{11}y_1 + a_{12}y_2)^i(a_{21}y_1 + a_{22}y_2)^j(a_{31}y_1 + a_{32}y_2 + a_{33}y_3)^k.
\]
To compute the trace series $\text{Tr}_{P_{\hbar}}(\phi, t)$, we determine the coefficient of the term $y_1^iy_2^jy_3^k$ after rearranging the terms of $\phi(y_1^iy_2^jy_3^k)$ in lexicographical order. 

Given that flipping $y_2$ and $y_1$ does not result in the generation of $y_3$, the exclusive source of $y_3$ would reside within the product $(a_{31}y_1 + a_{32}y_2 + a_{33}y_3)^k$. In order to generate $y_3^k$, the exclusive source is the product of $k$ copies of $a_{33}y_3$. This is due to the observation that choosing $a_{31}y_1$ or $a_{32}y_2$ in the product $(a_{31}y_1 + a_{32}y_2 + a_{33}y_3)^k$ results in a reduction by one in the number of copies of $y_3$, according to the constraints (2) and (3). 

It remains to generate $y_1^iy_2^j$. If either condition (4), (4'), or (4'') is met, it follows that the coefficient of the term $y_1^iy_2^j$ is identical to the coefficient of the term $y_1^iy_2^j$ when we rearrange the terms lexicographically subjecting to the relations $y_1y_2 = y_2y_1$. In conjunction with the above argument regarding $y_3^k$, the coefficient of the term $y_1^iy_2^jy_3^k$ is identical to the coefficient of the term $y_1^iy_2^jy_3^k$ when we rearrange the terms lexicographically in a polynomial ring, that is, subjecting to the relations $y_1y_2 = y_2y_1$, $y_2y_3 = y_3y_2$, $y_3y_1 = y_1y_3$. Consequently, 
\[
\text{Tr}_{P_{\hbar}}(\phi,t) = \frac{1}{\det(I_n - t\phi\big\vert_{(P_{\hbar})_1})} = \frac{1}{(1-\lambda_1t) \cdots (1 - \lambda_nt)},
\]
where $\lambda_1, \cdots, \lambda_n$ are the eigenvalues of the matrix $\phi\big\vert_{(P_{\hbar})_1}$. 

According to \cite[Theorem 3.1]{KKZ}, a quasi-reflection of a quantum polynomial ring, to which $P_{\hbar}$ belongs as established by Lemma 2.4.4, must be either a classical reflection or a mystic reflection. However, in the latter case, $\text{Tr}_{P_{\hbar}}(\phi, t) = \displaystyle{\frac{1}{(1-t)(1+t^2)} \neq \frac{1}{(1-t)^2f(t)}}$ for some polynomial $f(t)$ satisfying $f(1) \neq 0$. This rules out the possiblity of $\phi$ being a mystic reflection, leading to the conclusion that $\phi$ is a quasi-reflection if and only if $\phi$ is a classical reflection.
\end{proof}

Lemma 3.1 resolves 70\% of the instances of classifying quasi-reflections for unimodular quadratic Poisson structures on $\Bbbk[x_1, x_2, x_3]$. However, there are certain cases that remain unaddressed, and these will be discussed in the subsequent lemma.

\begin{lem}
Let $P = \Bbbk[x_1, x_2, x_3]$ be a unimodular quadratic Poisson algebra derived from the following superpotentials:
\[
\Omega_1: x_1^3, \quad \Omega_3: 2x_1x_2x_3, \quad \Omega_6: x_1^3+x_1^2x_3+x_2^2x_3, \quad \Omega_7: \frac{1}{3}(x_1^3+x_2^3+x_3^3)-\lambda x_1x_2x_3, \lambda^3 \neq 1.
\]
If $\phi$ is a graded automorphism of the standard deformation quantization $P_{\hbar}$ of $P$, then $\phi$ is a quasi-reflection if and only if $\phi$ is a classical reflection.
\end{lem}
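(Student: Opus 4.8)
The plan is to split the graded automorphisms of each of the four Poisson algebras into those already governed by Lemma 3.1 and the remaining ``exotic'' ones---those with nonzero $a_{23}$ in Case 1, the cyclic permutations in Cases 3 and 7, and the order-three rotations in Case 6---and to show that each exotic $\phi$ is simultaneously not a classical reflection and not a quasi-reflection, so that the equivalence holds trivially on both sides (the $\phi$ covered by Lemma 3.1 need no further argument). Since $P_{\hbar}$ is a quantum polynomial ring by Lemma 1.2, \cite[Theorem 3.1]{KKZ} guarantees that any quasi-reflection is either a classical reflection or a mystic reflection; thus, having excluded classical reflections by an eigenvalue computation, I only need to rule out quasi-reflections, which I would do by pinning down the trace series $\text{Tr}_{P_{\hbar}}(\phi,t)$.

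For Cases 1 and 3 the trace series collapses to the commutative one, $\text{Tr}_{P_{\hbar}}(\phi,t) = 1/\det(I - t\,\phi\big\vert_{(P_{\hbar})_1})$. In Case 1 the generator $y_1$ is central and $[y_2,y_3] \in \Bbbk\, y_1^2$, so every reordering correction carries an extra factor of $y_1$ and cannot contribute to the coefficient of the diagonal monomial $y_1^iy_2^jy_3^k$; in Case 3 the algebra is a skew polynomial ring, so reordering a monomial only rescales it by a nonzero scalar. In both situations $\phi$ is a quasi-reflection precisely when $\phi\big\vert_{(P_{\hbar})_1}$ has eigenvalue $1$ with multiplicity at least two, i.e. when $\phi$ is a classical reflection. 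I would then read off the eigenvalues of the exotic automorphisms---namely $\pm\sqrt{bf-ce}$ together with the two eigenvalues of $\begin{bmatrix}b & c\\ e & f\end{bmatrix}$ in Case 1, and a scalar times $\{1,\omega,\omega^2\}$ for a primitive cube root of unity $\omega$ in Case 3---and observe that none realizes the pattern $\{1,1,\xi\}$ (in Case 1 a double eigenvalue $1$ would force $c=e=0$, contrary to the exotic hypothesis), so neither side of the equivalence can hold.

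Case 6 is reduced to the same structure by a change of variables. Setting $u = y_1 + i y_2$ and $v = y_1 - i y_2$ turns the relation $[y_1,y_2] = \hbar(y_1^2 + y_2^2)$ into the skew-polynomial relation $vu = q\,uv$ with $q = \frac{i-\hbar}{i+\hbar}$, while the rotation automorphism becomes diagonal on the plane: $\phi(u) = a\omega^2 u$, $\phi(v) = a\omega\, v$, and $\phi(y_3) = (\text{linear in } u, v) + a\, y_3$. In these coordinates $P_{\hbar}$ is an Ore extension $\Bbbk_q[u,v][y_3;\sigma,\delta]$ and $\phi$ satisfies the hypotheses of Lemma 3.1 (it is lower triangular with $a_{12}=a_{21}=0$), so the equivalence follows from that lemma; the eigenvalues $\{a, a\omega, a\omega^2\}$ again preclude both a classical reflection and, hence, a quasi-reflection.

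The main obstacle is Case 7, where $P_{\hbar}$ is the generic three-dimensional Sklyanin algebra: its relations are genuinely non-triangular and are not simplified by any linear change of variables, so the trace-series computation used in the other cases is unavailable. Here I would instead invoke the rigidity of Sklyanin algebras established in \cite{KKZ}: a non-PI three-dimensional Sklyanin algebra---which the constraint $\lambda^3 \neq 1$ guarantees---admits no nontrivial quasi-reflections. To finish, I would verify directly from the list in Proposition 2.2 that every Case 7 automorphism (the scalars, the diagonal $\text{diag}(1,b,b^2)$ with $b^3=1$, and their composites with the three-cycle) has eigenvalues of the form $\{\rho, \rho\omega, \rho\omega^2\}$ and is therefore never a classical reflection. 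The delicate points are confirming that the cited rigidity theorem transfers to our standard deformation quantization and that $\lambda^3 \neq 1$ really does force the non-PI regime to which that theorem applies.
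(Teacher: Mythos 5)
Your proposal is correct, and for three of the four superpotentials it follows essentially the paper's own route: for $\Omega_1$ and $\Omega_3$ the paper likewise shows that the reordering corrections never contribute to the diagonal monomial $y_1^iy_2^jy_3^k$ (for $\Omega_1$ because every correction raises the $y_1$-degree, for $\Omega_3$ because the permuted monomial returns to itself only when $i=j=k$, where the $q$-power is trivial), so that $\text{Tr}_{P_{\hbar}}(\phi,t) = 1/\det(I-t\,\phi\big\vert_{(P_{\hbar})_1})$ and quasi-reflections coincide with classical reflections; for $\Omega_7$ the paper also simply cites \cite[Theorem 6.2, Corollary 6.3]{KKZ}. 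Where you genuinely diverge is $\Omega_6$: the paper does not change variables, but instead reads off from the classification that no classical reflections exist and then disposes of the rotation automorphisms by noting that a mystic reflection must have order $4$ while $\phi^4$ is visibly not the identity. Your alternative --- diagonalizing the rotation via $u=y_1+iy_2$, $v=y_1-iy_2$, which converts $[y_1,y_2]=\hbar(y_1^2+y_2^2)$ into a skew-polynomial relation and puts $\phi$ into the triangular form of Lemma 3.1 --- is legitimate and arguably more uniform, folding Case 6 into the same trace-series mechanism as the others; the only care needed is to observe that Lemma 3.1 applies to any presentation satisfying its hypotheses (its proof uses only the PBW basis and the commutator constraints), not just the original generators. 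The delicate point you flag for $\Omega_7$ --- that the standard deformation quantization must be non-PI for the rigidity theorem of \cite{KKZ} to apply --- is a genuine issue, but the paper cites the same result without addressing it either, so it is not a gap relative to the paper's own argument.
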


\begin{proof}

For \textit{Unimodular 1} and \textit{Unimodular 3}, our objective is to establish that for any graded automorphism of the standard deformation quantization $P_{\hbar}$, the trace series 
\[
\displaystyle{\text{Tr}_{P_{\hbar}}(\phi, t) = \frac{1}{(1-\lambda_1t)\cdots(1-\lambda_n t)}},
\]
where $\lambda_1, \cdots, \lambda_n$ are the eigenvalues of $\phi\big\vert_{(P_{\hbar})_1}$. As elucidated in Lemma 4.3.1, this equality enables us to conclude that a classical reflection is necessarily a quasi-reflection, and vice versa. 

\textit{Unimodular 1}. $\Omega_1 = x_1^3$. 

A graded automorphism $\phi$ of $P_{\hbar}$ has the form $\phi\big\vert_{(P_{\hbar})_1} = \begin{bmatrix}\pm\sqrt{bf-ce} & 0 & 0\\a & b & c\\d & e & f\end{bmatrix}$. Fix a degree $d \geq 0$. Take an arbitrary basis element $y_1^iy_2^jy_3^k$ from $(P_{\hbar})_d$: $i + j + k = d$ \cite[page 254]{DML}. Apply $\phi$ to $y_1^iy_2^jy_3^k$:
\[
\phi(y_1^iy_2^jy_3^k) = \pm\sqrt{bf-ce}y_1^i(ay_1+by_2+cy_3)^j(dy_1+ey_2+fy_3)^k.
\]
To compute the trace series $\text{Tr}_{P_{\hbar}}(\phi, t)$, we determine the coefficient of the term $y_1^iy_2^jy_3^k$ after rearranging the terms of $\phi(y_1^iy_2^jy_3^k)$ in lexicographical order. 

As $\pm\sqrt{bf-ce}y_1^i$ already supplies $y_1^i$, and flipping the order of any of the variables will not result in an increase of $y_2$ and $y_3$, our attention is solely on $(by_2+cy_3)^j(ey_2+fy_3)^k$ within the last two multiplicands. Utilizing the commutator relation $[y_2, y_3] = 3\hbar y_1^2$, the coefficient of the term $y_1^iy_2^jy_3^k$ is identical to the coefficient of the term $y_1^iy_2^jy_3^k$ when we rearrange the terms lexicographically in a polynomial ring. Using the same reasoning as in Lemma 4.3.1, $\displaystyle{\text{Tr}_{P_{\hbar}}(\phi, t) = \frac{1}{(1-\lambda_1t)\cdots(1-\lambda_n t)}}$, where $\lambda_1, \cdots, \lambda_n$ are the eigenvalues of $\phi\big\vert_{(P_{\hbar})_1}$, and we conclude that $\phi$ cannot be a mystic reflection.

\textit{Unimodular 3}. $\Omega_3 = 2x_1x_2x_3$. 

Applying Lemma 4.3.1, it is sufficient to compute the trace series $\text{Tr}_{P_{\hbar}}(\phi, t)$ of $\phi$ of the forms $\phi_1\big\vert_{(P_{\hbar})_1} = 
\begin{bmatrix}0 & a & 0\\0 & 0 & b\\c & 0 & 0\end{bmatrix}, \phi_2\big\vert_{(P_{\hbar})_1} = \begin{bmatrix}0 & 0 & a\\b & 0 & 0\\0 & c & 0\end{bmatrix}$. Again, fix a degree $d \geq 0$ and take an arbitrary basis element $y_1^iy_2^jy_3^k$ from $(P_{\hbar})_d$: $i + j + k = d$ \cite[page 254]{DML}. Apply $\phi_1$ and $\phi_2$:
\[
\phi_1(y_1^iy_2^jy_3^k) = a^ib^jc^ky_2^iy_3^jy_1^k = a^ib^jc^k(\frac{1-\hbar}{1+\hbar})^{(i-j)k}y_1^ky_2^iy_3^j,
\]
\[
\phi_2(y_1^iy_2^jy_3^k) = a^ib^jc^ky_3^iy_1^jy_2^k = a^ib^jc^k(\frac{1-\hbar}{1+\hbar})^{(k-j)i}y_1^jy_2^ky_3^i.
\]
In both instances, the existence of the term $y_1^iy_2^jy_3^k$ within $\phi(y_1^iy_2^jy_3^k)$ is equivalent to the condition that $i = j = k$, and when this specified condition is satisfied, the coefficient of $y_1^iy_2^jy_3^k$ is identical to the coefficient of $y_1^iy_2^jy_3^k$ when we rearrange the terms lexicographically in a polynomial ring. Using the same reasoning as in Lemma 4.3.1, $\displaystyle{\text{Tr}_{P_{\hbar}}(\phi, t) = \frac{1}{(1-\lambda_1t)\cdots(1-\lambda_n t)}}$, where $\lambda_1, \cdots, \lambda_n$ are the eigenvalues of $\phi\big\vert_{(P_{\hbar})_1}$, and we conclude that $\phi$ cannot be a mystic reflection.

\smallskip

For \textit{Unimodular 6}, we pursue an alternative approach.

\textit{Unimodular 6}. $\Omega_6: x_1^3+x_1^2x_3+x_2^2x_3$. 

Our analysis in Section 3.2 reveals that there is no classical reflection. As established in Lemma 2.4.4, the standard deformation quantization $P_{\hbar}$ is a quantum polynomial ring. Therefore, our objective boils down to demonstrating that a graded automorphism $\phi$ of the form $\phi\big\vert_{(P_{\hbar})_1} = \begin{bmatrix}-\frac{1}{2}a & \pm\frac{\sqrt{3}}{2}a & 0\\\mp\frac{\sqrt{3}}{2}a & -\frac{1}{2}a & 0\\\frac{9}{8}a & \mp\frac{3\sqrt{3}}{8}a & a\end{bmatrix}$ cannot be a mystic reflection by a combination of Lemma 4.3.1 and \cite[Theorem 3.1]{KKZ}. By calculation,
\[
\begin{bmatrix}-\frac{1}{2}a & \pm\frac{\sqrt{3}}{2}a & 0\\\mp\frac{\sqrt{3}}{2}a & -\frac{1}{2}a & 0\\\frac{9}{8}a & \mp\frac{3\sqrt{3}}{8}a & a\end{bmatrix}^4 = 
\begin{bmatrix}
-\frac{1}{2}a^4 & \pm\frac{\sqrt{3}}{2}a^4 & 0\\
\mp\frac{\sqrt{3}}{2}a^4 & -\frac{1}{2}a^4 & 0\\
\frac{9}{8}a^4 & \mp\frac{3\sqrt{3}}{8}a^4 & a^4
\end{bmatrix},
\]
is not the identity matrix, and, therefore, is not a mystic reflection. 

\clearpage

For \textit{Unimodular 7}, \cite[Theorem 6.2, Corollary 6.3]{KKZ} has provided a comprehensive analysis of its standard deformation quantization $P_{\hbar}$, demonstrating the absence of quasi-reflections. As $P_{\hbar}$ is a quantum polynomial ring, it is tautological to state that $\phi$ is a quasi-reflection if and only if $\phi$ is a classical reflection.
\end{proof}

It is time to record all Poisson reflections (resp. quasi-reflections) of unimodular quadratic Poisson structures on $\Bbbk[x_1, x_2, x_3]$ (resp. standard deformation quantization of unimodular quadratic Poisson structures on $\Bbbk[x_1, x_2, x_3]$) into a table for future reference.

\begin{prop}
\cite[Proposition 2.3]{Ma1} Let $P = \Bbbk[x_1, x_2, x_3]$ be one of the unimodular quadratic Poisson algebras and let $P_{\hbar}$ be its standard deformation quantization. Then $\text{PR}(P) = \text{QR}(P_{\hbar})$ contains the following automorphisms: 
\[
\renewcommand{\arraystretch}{1.25}
\begin{tabular}{ |p{1.5cm}||p{12.75cm}| } 
\hline
\centering \textbf{Case} & \centering $\textbf{PR}\boldsymbol{(P) = }\textbf{QR}\boldsymbol{(P_{\hbar})}$
\tabularnewline
\hline
\hline
\centering 1 & \centering 
$\begin{bmatrix}-1 & 0 & 0\\a & 1 & 0\\d & 0 & 1\end{bmatrix}$
\tabularnewline
\hline
\centering 2 & \centering 
$\begin{bmatrix}1 & 0 & 0\\0 & \xi & 0\\0 & d & 1\end{bmatrix}$
\tabularnewline
\hline
\centering 3 & \centering
$\begin{bmatrix}1 & 0 & 0\\0 & 1 & 0\\0 & 0 & \xi\end{bmatrix}$, 
$\begin{bmatrix}1 & 0 & 0\\0 & \xi & 0\\0 & 0 & 1\end{bmatrix}$,
$\begin{bmatrix}\xi & 0 & 0\\0 & 1 & 0\\0 & 0 & 1\end{bmatrix}$
\tabularnewline
\hline
\centering 4 & \centering
$\begin{bmatrix}0 & -1 & 0\\-1 & 0 & 0\\b & b & 1\end{bmatrix}$, 
$\begin{bmatrix}-1 & 0 & 0\\1 & 1 & 0\\b & 0 & 1\end{bmatrix}$,
$\begin{bmatrix}1 & 1 & 0\\0 & -1 & 0\\0 & c & 1\end{bmatrix}$
\tabularnewline
\hline
\centering 5 & \centering $\emptyset$
\tabularnewline
\hline
\centering 6 & \centering $\emptyset$
\tabularnewline
\hline
\centering 7 & \centering $\emptyset$
\tabularnewline
\hline
\centering 8 & \centering 
$\begin{bmatrix}
-1 & 0 & 0\\
0 & 1 & 0\\
2 & 0 &1
\end{bmatrix}$
\tabularnewline
\hline
\centering 9 & \centering 
$\emptyset$
\tabularnewline
\hline
\end{tabular}
\]
\end{prop}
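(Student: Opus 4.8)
The plan is to combine the identification of automorphism groups from Lemma 2.1 with the reduction ``quasi-reflection $=$ classical reflection'' furnished by Lemma 3.1 and Lemma 3.2, and then to read off the explicit reflections case by case from the classification in Proposition 2.2.

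First I would unwind the definitions so that both sides of the claimed equality $\text{PR}(P) = \text{QR}(P_{\hbar})$ become conditions on a single matrix. Under Lemma 2.1 a graded Poisson automorphism $\phi$ of $P$ and the graded automorphism $\phi_{\hbar}$ of $P_{\hbar}$ to which it corresponds are represented by the \emph{same} matrix $\phi\big\vert_{P_1} = \phi_{\hbar}\big\vert_{(P_{\hbar})_1}$. By definition $\phi \in \text{PR}(P)$ exactly when $\phi$ has finite order and this matrix has eigenvalues $\underbrace{1,\cdots,1}_{n-1},\xi$ with $\xi$ a primitive $m$th root of unity; that is, exactly when $\phi_{\hbar}$ is a \emph{classical reflection} of $P_{\hbar}$ (the bracket-preserving condition is automatic, since the matrix already lies in the common group). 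It therefore suffices to show that $\text{QR}(P_{\hbar})$ coincides with the set of classical reflections of $P_{\hbar}$.

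This is precisely what Lemma 3.1 and Lemma 3.2 deliver: together they establish that in each of the nine unimodular cases a graded automorphism of $P_{\hbar}$ is a quasi-reflection if and only if it is a classical reflection, the decisive input being the trace-series identity $\text{Tr}_{P_{\hbar}}(\phi,t) = \prod_i (1-\lambda_i t)^{-1}$, which via \cite[Theorem 3.1]{KKZ} excludes mystic reflections. Granting these, the chain $\text{QR}(P_{\hbar}) = \{\text{classical reflections of } P_{\hbar}\} = \text{PR}(P)$ (all under the identification of Lemma 2.1) gives the equality.

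To populate the table I would then proceed case by case through the automorphism families listed in Proposition 2.2, imposing finite order together with the eigenvalue pattern $1,1,\xi$. For Cases 1, 2, 3, 4, and 8 this is a direct eigenvalue-and-order computation that pins down the admissible parameters and yields the matrices recorded in the table. For Cases 5, 6, 7, and 9 the reflection set is empty: one checks that no finite-order member of the corresponding family attains the pattern $1,1,\xi$ — in Case 9, for instance, every element is $a$ times a unipotent matrix, so finite order forces the scalar $aI$ with a triple eigenvalue, while Case 7 (the Sklyanin algebra) is covered directly by \cite[Theorem 6.2, Corollary 6.3]{KKZ}, which shows $P_{\hbar}$ has no quasi-reflections at all. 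The main obstacle is organizational rather than conceptual: the genuinely delicate step, the exclusion of mystic reflections, has already been absorbed into Lemma 3.1 and Lemma 3.2, so what remains is the per-case bookkeeping of orders and eigenvalues, with the $\emptyset$ cases demanding the most care since there one must \emph{rule out} reflections rather than exhibit them.
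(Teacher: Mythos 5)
Your proposal is correct and follows essentially the same route as the paper: the paper records this table as a consequence of the identification in Lemma 2.1, the reduction of quasi-reflections to classical reflections via Lemmas 3.1 and 3.2 (with \cite[Theorem 3.1]{KKZ} excluding mystic reflections through the trace-series identity), and the case-by-case eigenvalue/order check against the families in Proposition 2.2, with the Poisson side imported from \cite[Proposition 2.3]{Ma1}. Your handling of the empty cases (scalar or unipotent families forcing a repeated eigenvalue, and the Sklyanin case via \cite[Theorem 6.2, Corollary 6.3]{KKZ}) matches the paper's reasoning.
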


\vspace{.5cm}

\section{A Variant of The Shephard-Todd-Chevalley Theorem}
In this section, we prove Theorem 0.1, a variant of the Shephard-Todd-Chevalley Theorem for the standard deformation quantizations of unimodular Poisson structures on $\Bbbk[x_1, x_2, x_3]$. 

\begin{thm}
(Shephard-Todd-Chevalley Theorem) Let $P = \Bbbk[x_1, x_2, x_3]$ be a unimodular quadratic Poisson algebra. Let $G$ be a finite subgroup of the graded Poisson automorphism group of $P$, and $G_{\hbar}$ be the corresponding finite subgroup of the graded automorphism group of $P_{\hbar}$ under the isomorphism $\text{PAut}_{\text{gr}}(P) \cong \text{Aut}_{\text{gr}}(P_{\hbar})$. The following are equivalent:
\begin{enumerate}[label = (\arabic*)]
    \item $G$ is generated by Poisson reflections.

    \item $G_{\hbar}$ is generated by quasi-reflections.

    \item $P_{\hbar}^{G_{\hbar}}$ is Artin-Schelter regular.
\end{enumerate}
\end{thm}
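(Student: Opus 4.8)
The plan is to split the three-way equivalence into $(1)\Leftrightarrow(2)$, which is purely group-theoretic, and $(2)\Leftrightarrow(3)$, which carries essentially all of the invariant-theoretic content. I would reserve the bulk of the effort for the latter.

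For $(1)\Leftrightarrow(2)$ I would argue directly from the two dictionaries already built. By Lemma 2.1 the map $\Phi\colon\text{PAut}_{\text{gr}}(P)\xrightarrow{\ \sim\ }\text{Aut}_{\text{gr}}(P_{\hbar})$ is a group isomorphism carrying $G$ onto $G_{\hbar}$, and by Proposition 3.3 it restricts to a bijection $\text{PR}(P)\to\text{QR}(P_{\hbar})$. Hence $\Phi$ sends the set $G\cap\text{PR}(P)$ of Poisson reflections contained in $G$ bijectively onto the set $G_{\hbar}\cap\text{QR}(P_{\hbar})$ of quasi-reflections contained in $G_{\hbar}$. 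Since an isomorphism carries a generating set to a generating set, $G=\langle G\cap\text{PR}(P)\rangle$ if and only if $G_{\hbar}=\langle G_{\hbar}\cap\text{QR}(P_{\hbar})\rangle$, which is exactly $(1)\Leftrightarrow(2)$. No case analysis enters here.

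For $(3)\Rightarrow(2)$ I would run the homological-trace (Molien) argument, which is the robust ``only if'' half of the noncommutative Shephard--Todd--Chevalley theorem. Since $P_{\hbar}$ is a noetherian AS-regular quantum polynomial ring of global dimension $3$ (Lemma 1.2) and $G_{\hbar}$ is finite, the identity
\[
h_{P_{\hbar}^{G_{\hbar}}}(t)=\frac{1}{|G_{\hbar}|}\sum_{g\in G_{\hbar}}\text{Tr}_{P_{\hbar}}(g,t)
\]
holds, and the trace formula $\text{Tr}_{P_{\hbar}}(g,t)=\frac{1}{\det(I-t\,g|_{(P_{\hbar})_1})}$ established in the proofs of Lemmas 3.1 and 3.2 is available for each $g\in G_{\hbar}$. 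Comparing the order of the pole at $t=1$ on the right-hand side (the identity contributes a pole of order $3$, each quasi-reflection a pole of order $2$, and every other finite-order element a pole of order at most $1$, since a noncentral finite-order automorphism fixing a codimension-one subspace would itself be a reflection) with the pole of order $3$ forced by AS-regularity of $P_{\hbar}^{G_{\hbar}}$ shows, exactly as in the commutative reflection count, that $G_{\hbar}$ must be generated by quasi-reflections; I would cite \cite{KKZ2} for the framework.

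The genuinely hard implication is $(2)\Rightarrow(3)$, which I would establish case by case along the classification of Proposition 3.3. In cases $5,6,7,9$ there are no quasi-reflections, so $(2)$ forces $G_{\hbar}=\{1\}$ and $P_{\hbar}^{G_{\hbar}}=P_{\hbar}$ is AS-regular by Lemma 1.2. In cases $1,2,3,4,8$ I would list the finite groups generated by the quasi-reflections recorded in Proposition 3.3 and, for each, produce $P_{\hbar}^{G_{\hbar}}$ explicitly: for case $3$, where $P_{\hbar}$ is a skew polynomial ring, the fixed ring is again a quantum affine space and one may quote the skew-polynomial Shephard--Todd--Chevalley theorem of \cite{KKZ2} directly; in the Jordan-type cases $1,2,4,8$ I would exhibit a set of invariant generators together with their straightening relations, realizing $P_{\hbar}^{G_{\hbar}}$ as an iterated Ore extension and hence as an AS-regular algebra. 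As a uniform guide I would note that $g$ and $g_{\hbar}$ act by the same matrix on degree one, so $\text{Tr}_{P}(g,t)=\text{Tr}_{P_{\hbar}}(g,t)$ and therefore $h_{P_{\hbar}^{G_{\hbar}}}(t)=h_{P^{G}}(t)$; combined with commutative Chevalley for $P^{G}$ this pins down in advance the Hilbert series that the explicit invariants must reproduce. The main obstacle is precisely this explicit control of the noncommutative fixed rings: unlike the commutative setting there is no single structural theorem covering cases $1,2,4,8$ simultaneously, so each reflection group requires one to identify invariant generators and verify their relations by hand inside $P_{\hbar}$.
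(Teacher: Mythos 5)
Your overall architecture is sound and your treatment of $(1)\Leftrightarrow(2)$ coincides with the paper's, which simply invokes Proposition 3.3 (the equality $\text{PR}(P)=\text{QR}(P_{\hbar})$ under the group isomorphism of Lemma 2.1). For $(2)\Leftrightarrow(3)$ the paper also reduces, via \cite[Lemma 1.10(c)]{KKZ} and \cite[Proposition 2.5]{KKZ2}, to showing that $P_{\hbar}^{G_{\hbar}}$ has finite global dimension whenever $G_{\hbar}$ is generated by quasi-reflections; your Molien pole-counting sketch for $(3)\Rightarrow(2)$ is the standard content of those citations and is acceptable, though note that the trace formula $\text{Tr}_{P_{\hbar}}(g,t)=\det(I-tg\vert_{(P_{\hbar})_1})^{-1}$ is only established in Lemmas 3.1 and 3.2 for certain shapes of automorphisms (the Unimodular 6 and 7 cases are handled by entirely different arguments), so you should not assert it for every $g\in G_{\hbar}$ without checking. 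Where you genuinely diverge is $(2)\Rightarrow(3)$: the paper does \emph{not} proceed case by case through the superpotentials. It invokes \cite[Theorem 5.3]{KKZ2}, which settles finite global dimension of $P_{\hbar}^{G_{\hbar}}$ for \emph{every finite abelian} reflection group at once; consulting the classification in Section 5, this covers all eight combinations except the single non-abelian one, namely $\Omega_4$ with $G_{\hbar}\cong S_3$. Only for that case does the paper produce explicit invariants $z_1,z_2,z_3$, verify their commutator relations, prove injectivity of the induced map from the iterated Ore extension via a Jacobian-type algebraic-independence criterion, and conclude by matching Hilbert series through Molien's theorem. Your plan to construct invariant generators by hand in cases $1,2,4,8$ would work in principle but is considerably more laborious (case 2 involves an infinite family of abelian groups $\mathbb{Z}_{n_1}\times\cdots\times\mathbb{Z}_{n_m}$, so ``by hand'' still requires a uniform argument), and your proposal defers precisely the computation that constitutes the real content of the paper's proof: the $S_3$ case is named as ``the main obstacle'' but not carried out. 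As written, then, the proposal is a correct strategy with the decisive step left unexecuted.
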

\begin{proof}
(1) $\Leftrightarrow$ (2) is established by Proposition 3.3. For (2) $\Leftrightarrow$ (3), we can apply a combination of [\cite[Lemma 1.10(c)]{KKZ} and \cite[Proposition 2.5]{KKZ2} to simplify the proof, reducing it to proving $P_{\hbar}^{G_{\hbar}}$ has finite global dimension for every finite $G_{\hbar} \subseteq \text{Aut}_{\text{gr}}(P_{\hbar})$ generated by quasi-reflections. In order to prove this statement, we invoke \cite[Theorem 5.3]{KKZ2} and conclude that for every finite abelian $G_{\hbar} \subseteq \text{Aut}_{\text{gr}}(P_{\hbar})$ generated by quasi-reflections, $P_{\hbar}^{G_{\hbar}}$ has finite global dimension. This immediately resolves all cases, except when $P$ is determined by the superpotential $\Omega_4 = x_1^2x_2 + x_1x_2^2$ and $G_{\hbar}$ is generated by $\bigg\{\phi_1\big\vert_{P_1} = \begin{bmatrix}0 & -1 & 0\\-1 & 0 & 0\\a & a & 1\end{bmatrix}$, $\phi_2\big\vert_{P_1} = 
\begin{bmatrix}-1 & 0 & 0\\1 & 1 & 0\\b & 0 & 1\end{bmatrix} \bigg\}$. 

To address this particular case, define a $\Bbbk$-algebra homomorphism: $\varphi: \Bbbk\langle z_1, z_2, z_3\rangle \to P_{\hbar}$, by mapping 
\[
z_1 \mapsto \frac{1}{3}(a+b)y_1 + \frac{1}{3}(a-2b)y_2 + y_3, \hspace{.2in}
z_2 \mapsto y_1^2+y_2^2+y_1y_2, \hspace{.2in}
z_3 \mapsto 2y_1^3 + 3y_1^2y_2 - 3y_1y_2^2 - 2y_2^3.
\]
Straightforward computation shows that $[z_1,z_2] = \hbar z_3$, $[z_2,z_3] = 0$, $[z_3,z_1] = -6\hbar z_2^2$ are annihilated by $\varphi$. As such, $\varphi$ passes through the following quotient:
\begin{align*}
\widetilde{\varphi}: \Bbbk\langle z_1, z_2, z_3 \rangle/([z_1,z_2]=\hbar z_3, [z_2,z_3]=0, [z_3,z_1]=-6\hbar z_2^2) \to P_{\hbar}.
\end{align*}

The domain has a $\Bbbk$-linear basis $\{z_1^iz_2^jz_3^k: i, j, k \geq 0\}$. In the following argument, we will assume every polynomial in the domain is written with respect to this $\Bbbk$-linear basis. Suppose that $f(z_1,z_2,z_3) \in \ker\widetilde{\varphi}$. Decompose $f$ into two parts: a part consisting of all the terms containing $z_1$, called $g$, and another part consisting of all the remaining terms, called $h$. Because $f \in \ker\widetilde{\varphi}$, we have $\widetilde{\varphi}(g) = -\widetilde{\varphi}(h)$. Rewrite $g = z_1g'$ for some $g' \in \Bbbk\langle z_1, z_2, z_3\rangle$. In the equation $\widetilde{\varphi}(g) = -\widetilde{\varphi}(h)$,
\[
\text{LHS} = \frac{1}{3}(a+b)y_1\widetilde{\varphi}(g') + \frac{1}{3}(a-2b)y_2\widetilde{\varphi}(g') + y_3\widetilde{\varphi}(g').
\]
In particular, LHS must contain a term containing $y_3$, as flipping any variable does not produce any additional $y_3$ in $P_{\hbar}$. However, RHS does not contain any $y_3$, as $\varphi(z_2), \varphi(z_3) \in \Bbbk[y_1, y_2] \subseteq P_{\hbar}$. This is a contradiction unless $g = 0$. 

Now, the domain of $\widetilde{\varphi}$ can be realized as an Ore-extension $\Bbbk[z_2,z_3][z_1, \text{id}, \delta]$, where $\delta$ is the derivation satisfying $\delta(z_2) = \hbar z_3$ and $\delta(z_3) = 6\hbar z_2^2$. Since $\ker{\widetilde{\varphi}} \subseteq \Bbbk[z_2, z_3] \subseteq \Bbbk[z_2,z_3][z_1, \text{id}, \delta]$, we may restrict the domain to $\Bbbk[z_2,z_3]$ and subsequently restrict the codomain to $\Bbbk[y_1, y_2]$:
\[
\underbrace{\Bbbk[z_2,z_3] \xrightarrow{\hspace{0.25cm}\phi\hspace{0.25cm}} \Bbbk[y_1,y_2] \subseteq P_{\hbar}.}_{\widetilde{\varphi}\big\vert_{\Bbbk[z_2,z_3]}}
\]

The $\Bbbk$-algebra homomorphism $\widetilde{\varphi}$ if and only if the $\Bbbk$-algebra homomorphism $\phi$ is injective, and if and only if $\phi(z_2)$, $\phi(z_3)$ are algebraically independent. To assert algebraic independence, it is sufficient to compute the following determinant as a result of \cite[Theorem 2.4]{ER}:
\begin{align*}
\renewcommand{\arraystretch}{1.75}
\det
\begin{bmatrix}
\displaystyle{\frac{\partial \phi(z_2)}{\partial y_1}} & 
\displaystyle{\frac{\partial \phi(z_2)}{\partial y_2}}\\
\displaystyle{\frac{\partial \phi(z_3)}{\partial y_1}} & 
\displaystyle{\frac{\partial \phi(z_3)}{\partial y_2}}
\end{bmatrix} =
\begin{vmatrix}
2y_1+y_2 & y_1+2y_2\\
6y_1^2+6y_1y_2-3y_2^2 & 3y_1^2-6y_1y_2-6y_2^2
\end{vmatrix}
= 27(y_1^2y_2+y_1y_2^2)
\neq 0.
\end{align*}
Consequently, $\widetilde{\varphi}$ is injective as desired. In addition, it should be noted that the elements $z_1, z_2, z_3$ along with their commutator relations, remain invariant under the action of the group $G_{\hbar}$. The detailed calculation required to confirm this assertion is routine and, for the sake of brevity, will be omitted. This, in conjunction with the preceding argument, demonstrates that $\text{Dom}(\widetilde{\varphi})$ can be embedded into $P_{\hbar}^{G_{\hbar}}$. To establish an isomorphism, we complete the embedding into a short exact sequence:
\[
0 \to \text{Dom}(\widetilde{\varphi}) \to P_{\hbar}^{G_{\hbar}} \to P_{\hbar}^{G_{\hbar}}/\text{Dom}(\widetilde{\varphi}) \to 0. 
\]
First, since $\text{Dom}(\widetilde{\varphi})$ has a $\Bbbk$-linear basis $\{z_1^iz_2^jz_3^k: i, j, k \geq 0\}$, its Hilbert series 
\[
h_{\text{Dom}(\widetilde{\varphi})}(t) = \displaystyle{\frac{1}{(1-t)(1-t^2)(1-t^3)}}.
\]
On a different note, we can calculate the Hilbert series of $P_{\hbar}^{G_{\hbar}}$ using the Molien's Theorem:
\begin{align*}
h_{{P_{\hbar}}^{G_{\hbar}}}(t) =& \frac{1}{6}\bigg(\frac{1}{(1-t)^3} + \frac{3}{(1-t)^2(1+t)} + \frac{2}{(1-t)(1+t+t^2)}\bigg)\\
=& \frac{1}{(1-t)(1-t^2)(1-t^3)}.
\end{align*}
By the additivity of Hilbert series in a short exact sequence, the embedding $\text{Dom}\widetilde{\varphi} \hookrightarrow P_{\hbar}^{G_{\hbar}}$ translates into $\text{Dom}\widetilde{\varphi} \cong P_{\hbar}^{G_{\hbar}}$. From the isomorphism, we can realize $P_{\hbar}^{G_{\hbar}}$ as an iterated Ore-extension of an Artin-Schelter regular algebra, and therefore is Artin-Schelter regular.
\end{proof}

\

\section{Commutativity of Invariants and Deformation Quantization}

In this section, we prove taking invariant subalgebras under finite reflection groups and taking the standard deformation quantizations are two commutative processes for unimodular Poisson structures on $\Bbbk[x_1, x_2, x_3]$.

Let $P = \Bbbk[x_1, x_2, x_3]$ be a unimodular quadratic Poisson algebra. Let $G$ be a finite subgroup of the graded Poisson automorphism group of $P$ and let $G_{\hbar}$ be the corresponding finite subgroup of the graded automorphism group of $P_{\hbar}$ under the isomorphism $\text{PAut}_{\text{gr}}(P) \cong \text{Aut}_{\text{gr}}(P_{\hbar})$. After a thorough examination of the proof of \cite[Theorem 3.1]{Ma1}, there are eight possible combinations of Poisson algebras $P$ and finite Poisson reflection groups $G$. 
\begin{center}
\renewcommand{\arraystretch}{1.25}
\begin{tabular}{ |p{2.0cm}||p{3.0cm}|p{6cm}|} 
\hline
\centering \textbf{Case} &
\centering \textbf{Superpotential} &
\centering $\boldsymbol{G_{\hbar}}$ \tabularnewline
\hline
\hline
\centering 1 &
\centering $\Omega_1$ & 
\centering $\mathbb{Z}_2$
\tabularnewline
\hline
\centering 2 &
\centering $\Omega_2$ & 
\centering $\mathbb{Z}_{n_1} \times \cdots \times \mathbb{Z}_{n_m}$, $n_i | n_{i+1}$
\tabularnewline
\hline
\centering &
& \hspace{1.35cm} $(\mathbb{Z}_{l_1} \times \cdots \times \mathbb{Z}_{l_{d_1}})\times$\tabularnewline
\centering 3 & \centering $\Omega_3$
& \hspace{1.35cm} $(\mathbb{Z}_{m_1} \times \cdots \times \mathbb{Z}_{m_{d_2}})\times$ \tabularnewline
& & \hspace{1.35cm} $(\mathbb{Z}_{n_1} \times \cdots \times \mathbb{Z}_{n_{d_3}})$
\tabularnewline
& & \centering $l_i | l_{i+1}$, $m_i | m_{i+1}$, $n_i | n_{i+1}$\tabularnewline
\hline
\centering 4, 5, 6 & 
\centering $\Omega_4$ & 
\centering $\mathbb{Z}_2$
\tabularnewline
\hline
\centering 7 &
\centering $\Omega_4$ & 
\centering $S_3$
\tabularnewline
\hline
\centering 8 &
\centering $\Omega_8$ & 
\centering $\mathbb{Z}_2$
\tabularnewline
\hline
\end{tabular}
\end{center}

\smallskip

Consider the following polynomials and commutator relations in the standard deformation quantization $P_{\hbar}$:

\[
\renewcommand{\arraystretch}{1.25}
\begin{tabular}{|p{1cm}||p{4.5cm}|p{4.5cm}|p{4.5cm}| } 
\hline
\centering \textbf{Case} &
\centering $\boldsymbol{w_1}$ &
\centering $\boldsymbol{w_2}$ & 
\centering $\boldsymbol{w_3}$
\tabularnewline
\hline
\hline
\centering 1 & 
\centering $y_1^2$ & 
\centering $\frac{a}{2}y_1+y_2$ & 
\centering $\frac{d}{2}y_1+y_3$
\tabularnewline
\hline
\centering 2 & 
\centering $y_1$ & 
\centering $d_1y_2 + (1-\xi_{n_1})y_3$ & 
\centering $y_2^{l}$
\tabularnewline
\hline
\centering 3 & 
\centering $y_1^m$ & 
\centering $y_2^n$ & 
\centering $y_3^l$
\tabularnewline
\hline
\centering 4 & 
\centering $y_1y_2$ & 
\centering $-y_1 + y_2$ & 
\centering $ay_1 + y_3$
\tabularnewline
\hline
\centering 5 & 
\centering $y_1^2$ & 
\centering $\frac{b}{2}y_1+y_3$ & 
\centering $\frac{1}{2}y_1+y_2$
\tabularnewline
\hline
\centering 6 & 
\centering $y_2^2$ & 
\centering $-cy_1+y_3$ & 
\centering $2y_1+y_2$
\tabularnewline
\hline
\centering 7 & 
\centering $\frac{1}{3}(a+b)y_1 + \frac{1}{3}(2a-b)y_2+y_3$ & 
\centering $y_1^2+y_2^2+y_1y_2$ & 
\centering $2y_1^3+3y_1^2y_2-3y_1y_2^2-2y_2^3$
\tabularnewline
\hline
\centering 8 & 
\centering $y_2$ & 
\centering $y_1 + y_3$ & 
\centering $y_1^2$
\tabularnewline
\hline
\end{tabular}
\]
\[
\renewcommand{\arraystretch}{1.25}
\begin{tabular}{ |p{1cm}||p{4.5cm}|p{4.5cm}|p{4.5cm}| } 
\hline
\centering \textbf{Case} &
\centering $\boldsymbol{[w_1, w_2]}$ &
\centering $\boldsymbol{[w_2, w_3]}$ & 
\centering $\boldsymbol{[w_3, w_1]}$
\tabularnewline
\hline
\hline
\centering 1 & 
\centering $0$ & 
\centering $3\hbar w_1$ & 
\centering $0$
\tabularnewline
\hline
\centering 2 & 
\centering $(\xi_{n_1} - 1)\hbar w_1^2$ & 
\centering $2l(\xi_{n_1} - 1)\hbar w_1w_3$ & 
\centering $0$
\tabularnewline
\hline
\centering 3 & 
\centering $\hbar \frac{2mn + \textit{higher degree terms}}{(1+\hbar)^{mn}}w_1w_2$ & 
\centering $\hbar \frac{2nl + \textit{higher degree terms}}{(1+\hbar)^{mn}}w_2w_3$ & 
\centering $\hbar \frac{2ml + \textit{higher degree terms}}{(1-\hbar)^{mn}}w_1w_3$
\tabularnewline
\hline
\centering 4 & 
\centering $0$ & 
\centering $\hbar(6w_1 + w_2^2)$ & 
\centering $\hbar w_1w_2$
\tabularnewline
\hline
\centering 5 & 
\centering $-4\hbar w_1w_3$ & 
\centering $\hbar(\frac{3}{4}w_1 - w_3^2)$ & 
\centering $0$
\tabularnewline
\hline
\centering 6 & 
\centering $2\hbar w_1w_3$ & 
\centering $-\frac{3}{2}w_1 + \frac{1}{2}w_3^2$ & 
\centering $0$
\tabularnewline
\hline
\centering 7 & 
\centering $\hbar w_3$ & 
\centering $0$ & 
\centering $- 6\hbar w_2^2$
\tabularnewline
\hline
\centering 8 & 
\centering $\hbar(\frac{2}{2+\hbar}w_1w_2 + \frac{6}{2+\hbar}w_3)$ & 
\centering $\hbar(\frac{8}{(2+\hbar)^2}w_2w_3)$ & 
\centering $\hbar(\frac{8}{(2-\hbar)^2}w_1w_3)$
\tabularnewline
\hline
\end{tabular}
\]

\smallskip

\begin{lem}
The polynomials $w_i$ are invariant under the action of $G_{\hbar}$, and the commutator relations $[w_i, w_j]$ are preserved in $P_{\hbar}$. 
\end{lem}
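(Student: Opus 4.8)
The plan is to verify both assertions by a direct, case-by-case computation over the eight combinations of superpotential and reflection group recorded in the tables above. The essential simplification is that in each case the generators of $G_{\hbar}$ are precisely the quasi-reflections classified in Proposition 3.3, and each acts on $P_{\hbar}$ through the matrix describing its action on the degree-one piece $(P_{\hbar})_1$; so everything reduces to applying explicit linear substitutions to the $y_i$ and then straightening products in the relations listed in Section 1.

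For the invariance statement, I would first reduce to a generating set: because $G_{\hbar}$ acts by graded algebra automorphisms, a polynomial fixed by every generator is fixed by the whole group, so it suffices to apply each generating reflection to $w_1, w_2, w_3$ and check they are unchanged. In the cyclic and abelian cases (Cases 1, 2, 3, 8) the generators are triangular or diagonal, and the coefficients appearing in the linear $w_i$ are tuned exactly so that the unipotent (shift) part of each reflection cancels — for instance in Case 1 the reflection sends $y_1 \mapsto -y_1$ and $y_2 \mapsto ay_1 + y_2$, so that $\tfrac{a}{2}y_1 + y_2 \mapsto \tfrac{a}{2}y_1 + y_2$. In the monomial cases (Cases 2, 3) one additionally uses that the exponents in $w_i$ are the orders of the relevant cyclic factors, so each generating scaling $y_i \mapsto \xi y_i$ fixes $w_i$ because $\xi^{(\text{order})} = 1$. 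Case 7, where $G_{\hbar} \cong S_3$, requires invariance under all three generating reflections at once, but the symmetric forms $w_2 = y_1^2 + y_2^2 + y_1y_2$ and $w_3 = 2y_1^3 + 3y_1^2y_2 - 3y_1y_2^2 - 2y_2^3$ are designed precisely for this.

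For the commutator statement, I would compute each $[w_i, w_j] = w_iw_j - w_jw_i$ by reducing products to the PBW basis $\{y_1^ay_2^by_3^c\}$ using the Section 1 relations, pushing the computation down to the generator brackets $[y_i, y_j]$ via the derivation identities $[uv, w] = u[v,w] + [u,w]v$ and $[u, vw] = [u,v]w + v[u,w]$, and then re-collecting the result into the tabulated polynomial in the $w_i$. Case 7 is already settled: its brackets are exactly the relations $[z_1,z_2] = \hbar z_3$, $[z_2,z_3] = 0$, $[z_3,z_1] = -6\hbar z_2^2$ verified in the proof of Theorem 4.1. For the $q$-commuting Case 3, setting $q = \tfrac{1-\hbar}{1+\hbar}$ gives $y_2^ny_1^m = q^{mn}y_1^my_2^n$, hence $[w_1,w_2] = (1 - q^{mn})w_1w_2$, and the expansion $(1+\hbar)^{mn} - (1-\hbar)^{mn} = 2mn\,\hbar + (\text{higher order in } \hbar)$ recovers the stated coefficient; the two remaining brackets are analogous.

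The main obstacle is computational bookkeeping rather than any conceptual difficulty. The cases whose defining relations carry $\hbar$-rational coefficients — Cases 3, 6, and especially 8 — together with the cases having a genuinely quadratic or cubic generator (Cases 1, 4, 5, 6, 7, 8), require careful normal ordering, since straightening a single product can produce lower-order correction terms that must be tracked to the correct power of $\hbar$. I would control these by first recording the generator brackets $[y_i,y_j]$ explicitly from Section 1, computing $[w_i,w_j]$ symbolically, and then confirming the claimed identity by matching coefficients of each PBW monomial. The invariance verifications are routine once the generating reflections are fixed, so the bulk of the effort lies in the commutator identities for the $\hbar$-rational cases.
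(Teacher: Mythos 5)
Your proposal is correct and takes essentially the same route as the paper: a direct case-by-case verification, checking invariance of each $w_i$ under the generating reflections and confirming each bracket $[w_i,w_j]$ by normal ordering against the defining relations of $P_{\hbar}$. In fact you supply more detail than the paper, whose proof gives only a structural observation for the invariance part (each $w_i$ is linear, a power of a single scaled variable, or lies in a commutative subalgebra) and explicitly omits the commutator verifications; your worked Case 3 computation with $q=\tfrac{1-\hbar}{1+\hbar}$ matches the tabulated coefficient.
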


We shall introduce one notation before presenting the proof for Lemma 5.6.1. In this section, for a pair of variables $v = (v_1, v_2, v_3), w = (w_1, w_2, w_3)$, we will use $\varphi_{vw}$ to denote the $\Bbbk$-vector space map:
\begin{align*}
\varphi_{vw}: \bigoplus_{d \geq 0}\Bbbk\left\{v_1^iv_2^jv_3^k: i + j + k = d\right\} \to& \bigoplus_{d \geq 0}\Bbbk \left\{w_1^iw_2^jw_3^k: i + j + k = d\right\}\\
v_1^iv_2^jv_3^k \mapsto& w_1^iw_2^jw_3^k.
\end{align*}

\begin{proof}
Choose an arbitrary graded Poisson automorphism $\phi \in G$. In each case, the polynomial $w_i$ is either linear, or it satisfies $w_i = y_j^{n}$ and $\phi(x_j) = \lambda x_k$, or it is contained in some $\Bbbk\langle \Bbbk y_j \oplus \Bbbk y_k \rangle$ such that $[y_j, y_k] = 0$ in the standard deformation quantization $P_{\hbar}$. As a consequence of the construction of $\widetilde{\phi}$, the polynomial $\varphi_{xy}(w_i)$ is invariant under the action of $\widetilde{\phi}$.

Regarding the commutator relations, it is challenging to prove that these relations are preserved in $P_{\hbar}$ systematically because we are working with a variant of the classic deformation quantization, and there is no natural $\Bbbk$-algebra homormorphism from $P_{\hbar}$ to $P$. Alternatively, we can conduct a manual verification of the preservation of each relation in $P_{\hbar}$ since there are a limited number of cases. For the sake of conciseness, we will omit the calculations and assumes the statement for free.
\end{proof}

\

Based on Lemma 5.1, for each case, we can define a $\Bbbk$-algebra homomorphism:
\begin{align*}
\Phi: \Bbbk\langle z_1, z_2, z_3\rangle/([z_i, z_j] = \varphi_{wz}([w_i,w_j]))_{1 \leq i, j \leq 3} \to& P_{\hbar}^{G_{\hbar}} \subseteq P_{\hbar},\\
z_1, z_2, z_3 \mapsto& w_1, w_2, w_3.
\end{align*}

In the subsequent three Lemmas, we will establish either injectivity or surjectivity for $\Phi$ in all cases.

\begin{lem}
For Case 1, Case 2, Case 4, Case 5, Case 6, Case 7, the $\Bbbk$-algebra homomorphism $\Phi$ is injective. 
\end{lem}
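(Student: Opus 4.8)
The plan is to prove injectivity of $\Phi$ by establishing, in each of the listed cases, that the domain and its image in $P_{\hbar}$ have matching Hilbert series together with a genuine embedding, or more directly by exhibiting algebraic/linear independence of the generators $w_1, w_2, w_3$. First I would observe that in each case the domain $\Bbbk\langle z_1, z_2, z_3\rangle/([z_i,z_j] = \varphi_{wz}([w_i,w_j]))$ is constructed to be an iterated Ore extension (as was done explicitly for Case 7 in the proof of Theorem 4.1, where the domain appeared as $\Bbbk[z_2,z_3][z_1,\mathrm{id},\delta]$), so it possesses the $\Bbbk$-linear basis $\{z_1^iz_2^jz_3^k : i,j,k \geq 0\}$ and a known Hilbert series determined by the degrees of $w_1, w_2, w_3$. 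The core of the argument is then to show $\ker\Phi = 0$.

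The key steps, carried out case by case, are as follows. For the cases where one of the $w_i$ is linear and the commutator relations do not reintroduce that variable (Cases 1, 4, 5, 6, and the $z_1$-variable in Case 7), I would mimic the degree/variable-counting argument from Theorem 4.1: decompose an arbitrary $f \in \ker\Phi$ into the part $g$ containing the ``special'' generator and the remaining part $h$, write $g = z_\ast g'$, and apply $\Phi$. Since applying $\Phi$ and reordering monomials in $P_{\hbar}$ cannot manufacture the special variable (here $y_3$, or whichever variable the linear $w_i$ surjects onto) out of products lying in the subalgebra generated by the other $y$'s, the equation $\Phi(g) = -\Phi(h)$ forces $g = 0$, and one proceeds inductively. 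For the cases reducing to a rank-two algebraic-independence check in a polynomial subring $\Bbbk[y_i, y_j] \subseteq P_{\hbar}$ (Case 7's restriction to $\Bbbk[z_2,z_3]$, and analogously the two-generator subproblems in Cases 5, 6), I would invoke \cite[Theorem 2.4]{ER} and compute the Jacobian determinant of the relevant $\partial \phi(z_i)/\partial y_j$, verifying it is nonzero exactly as in the Case 7 computation yielding $27(y_1^2y_2 + y_1y_2^2) \neq 0$.

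For Case 2, the domain is a quotient with relations $[z_1,z_2] = (\xi_{n_1}-1)\hbar z_1^2$, $[z_2,z_3] = 2l(\xi_{n_1}-1)\hbar z_1 z_3$, $[z_3,z_1]=0$, and $w_3 = y_2^l$ is a pure power; here I would first use the fact that $w_1 = y_1$ and $w_2$ is linear while $w_3 = y_2^l$ to peel off generators one at a time, again via the Ore-extension structure, reducing to checking that $y_1$ and $y_2^l$ remain algebraically independent in $P_{\hbar}$, which is immediate since they already commute into a polynomial subring.

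The main obstacle I expect is Case 7, since the two-step reduction (first showing $\ker\Phi \subseteq \Bbbk[z_2,z_3]$ by a $y_3$-counting argument, then an algebraic-independence computation) genuinely requires both techniques in tandem and is the most delicate; however, that case has essentially been carried out already inside the proof of Theorem 4.1, so I would simply cite that argument verbatim. The more subtle recurring difficulty across all cases is ensuring that reordering monomials in $P_{\hbar}$ — where commutators produce lower-order correction terms with $\hbar$-dependent coefficients (as visible in the relation tables) — never creates the ``forbidden'' top variable; this must be justified from the explicit commutator relations in each case, and relies on the structural observation (used throughout Section 3) that flipping variables in these standard deformation quantizations strictly decreases, and never increases, the multidegree in the distinguished variable.
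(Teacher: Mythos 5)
Your proposal is correct and follows essentially the same route as the paper: a variable-counting argument showing that any element of $\ker\Phi$ cannot involve the distinguished generator $z_j$ (since reordering monomials in $P_{\hbar}$ never creates the corresponding variable $y_i$), followed by a restriction to a two-variable polynomial subalgebra and the Jacobian criterion of \cite[Theorem 2.4]{ER} for algebraic independence. The only difference is presentational — the paper packages all six cases into one uniform argument via five structural conditions on the pair $(i,j)$, whereas you treat the cases individually — and your flagged concern about commutators reintroducing the forbidden variable is exactly what those conditions are designed to rule out.
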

\begin{proof}
A common characteristic shared by Case 1, Case 2, Case 4, Case 5, Case 6, and Case 7 is the existence of a pair of indices $(i, j) \in \{1,2,3\}^{\oplus 2}$ satisfying:
\begin{enumerate}[label = (\arabic*)]
    \item the variable $y_i$ is contained in $w_j$,

    \item the variable $y_i$ is not contained in $w_k$ for all $k \in \{1,2,3\} \backslash \{j\}$,

    \item the variables $y_k, y_l$ commute for all $k, l \in \{1,2,3\} \backslash \{i\}$,

    \item the commutator bracket $[y_i, y_k]$ does not contain a term containing $y_i$ for all $k \in \{1,2,3\}$, 

    \item the variables $w_k, w_l$ commute for all $k, l \in \{1,2,3\} \backslash \{j\}$. 
\end{enumerate}

Suppose that $f \in \ker\Phi$. Decompose $f$ into two parts: a part consisting of all the terms containing $z_j$, called $g$, and another part consisting of all the remaining terms, called $h$. Because $f \in \ker{\Phi}$, we have ${\Phi}(g) = -{\Phi}(h)$. In the equation,
\[
\text{LHS} = y_i g' + \textit{remaining terms}, 
\]
for some polynomial $g'$. In particular, LHS contain at least one term containing $y_i$, as flipping any variable does not produce an additional $y_i$. However, RHS cannot contain any term containing $y_i$, as $y_i$ is not contained in $w_k$ for all $k \in \{1,2,3\} \backslash \{j\}$. This is a contradiction unless $g = 0$. 

Let $\{a, b\} = \{1,2,3\} \backslash \{j\}$ and $\{c, d\} = \{1, 2, 3\} \backslash \{i\}$. The domain of ${\Phi}$ can be realized as an Ore-extension $\Bbbk[z_a,z_b][z_j, \text{id}, \delta]$, for some appropriate choice of $\delta$. Since $\ker{\Phi} \subseteq \Bbbk[z_a, z_b] \subseteq \Bbbk[z_a,z_b][z_j, \text{id}, \delta]$, we may restrict the domain to $\Bbbk[z_a,z_b]$ and subsequently restrict the codomain to $\Bbbk[y_c, y_d]$:
\[
\underbrace{\Bbbk[z_a,z_b] \xrightarrow{\hspace{0.25cm}\Phi'\hspace{0.25cm}} \Bbbk[y_c,y_d] \subseteq P_{\hbar}.}_{\Phi\big\vert_{\Bbbk[z_a,z_b]}}
\]

The $\Bbbk$-algebra homomorphism $\Phi$ if and only if the $\Bbbk$-algebra homomorphism $\Phi'$ is injective, and if and only if $\Phi'(z_a)$, $\Phi'(z_b)$ are algebraically independent. To assert algebraic independence, it is sufficient to compute the following determinant as a result of \cite[Theorem 2.4]{ER}:
\begin{align*}
\renewcommand{\arraystretch}{1.75}
\det
\begin{bmatrix}
\displaystyle{\frac{\partial \Phi'(z_a)}{\partial y_c}} & 
\displaystyle{\frac{\partial \Phi'(z_a)}{\partial y_d}}\\
\displaystyle{\frac{\partial \Phi'(z_b)}{\partial y_c}} & 
\displaystyle{\frac{\partial \Phi'(z_b)}{\partial y_d}}
\end{bmatrix},
\end{align*}
which, in the aforementioned cases, does not vanish. Consequently, $\Phi$ is injective as desired.
\end{proof}

\smallskip

\begin{lem}
For Case 3, the $\Bbbk$-algebra homomorphism $\Phi$ is injective.
\end{lem}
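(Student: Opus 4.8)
The plan is to exploit the especially rigid monomial structure of Case 3 and thereby avoid the Jacobian/algebraic-independence criterion used in the preceding lemma. Here $\Omega_3 = 2x_1x_2x_3$, so $P_{\hbar}$ is the quantum affine $3$-space with $y_2y_1 = \tfrac{1-\hbar}{1+\hbar}y_1y_2$, $y_3y_2 = \tfrac{1-\hbar}{1+\hbar}y_2y_3$, $y_3y_1 = \tfrac{1+\hbar}{1-\hbar}y_1y_3$, and the invariant generators are the pure powers $w_1 = y_1^m$, $w_2 = y_2^n$, $w_3 = y_3^l$. Writing $q = \tfrac{1-\hbar}{1+\hbar}$, I would first record that $w_2w_1 = q^{mn}w_1w_2$, $w_3w_2 = q^{nl}w_2w_3$ and $w_3w_1 = q^{-ml}w_1w_3$; expanding $1-q^{mn} = \hbar(2mn + \cdots)/(1+\hbar)^{mn}$ and the analogous expressions recovers precisely the Case 3 commutator table, and confirms that the relations imposed on the domain of $\Phi$ are the $q$-commutation relations $z_2z_1 = q^{mn}z_1z_2$, $z_3z_2 = q^{nl}z_2z_3$, $z_3z_1 = q^{-ml}z_1z_3$. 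Hence the domain of $\Phi$ is itself a quantum affine $3$-space, i.e. an iterated Ore extension $\Bbbk[z_1][z_2;\sigma_2][z_3;\sigma_3]$ with diagonal automorphisms, which is a domain carrying the PBW basis $\{z_1^iz_2^jz_3^k : i,j,k \geq 0\}$ exactly as in Lemma 1.2.

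The core of the argument is then a monomial bookkeeping. Since $\Phi$ is an algebra homomorphism sending each generator to a single ordered generator, I would evaluate it directly on the PBW basis:
\[
\Phi(z_1^iz_2^jz_3^k) = \Phi(z_1)^i\Phi(z_2)^j\Phi(z_3)^k = (y_1^m)^i(y_2^n)^j(y_3^l)^k = y_1^{mi}y_2^{nj}y_3^{lk},
\]
with no reordering required because each $\Phi(z_t)$ is a power of the single variable $y_t$. By the Donin--Makar-Limanov basis $\{y_1^ay_2^by_3^c\}$ of $P_{\hbar}$, each $y_1^{mi}y_2^{nj}y_3^{lk}$ is a single basis monomial, and the exponent assignment $(i,j,k) \mapsto (mi,nj,lk)$ is injective; thus $\Phi$ carries the PBW basis of its domain to a linearly independent subset of $P_{\hbar}$.

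To finish, I would take an arbitrary $f = \sum_{i,j,k}\alpha_{ijk}z_1^iz_2^jz_3^k \in \ker\Phi$, apply $\Phi$ to obtain $\sum_{i,j,k}\alpha_{ijk}\,y_1^{mi}y_2^{nj}y_3^{lk} = 0$, and conclude from the linear independence just established that every $\alpha_{ijk} = 0$, so $f = 0$ and $\Phi$ is injective. The only genuinely load-bearing points are verifying that the domain really is a skew polynomial ring with the stated PBW basis (which follows from the $q$-commutation relations of Lemma 5.1, or equally from the iterated-Ore-extension structure of Lemma 1.2) and noting the injectivity of $(i,j,k)\mapsto(mi,nj,lk)$; both are routine, and it is precisely this pure-power structure of the $w_i$ that lets Case 3 be handled separately from the algebraic-independence cases of the preceding lemma.
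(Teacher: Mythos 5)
Your proof is correct, and it takes a more direct route than the paper. The paper disposes of Case 3 in one line by iterating the elimination argument from the preceding lemma three times: any element of $\ker\Phi$ containing $z_1$ would map to something containing $y_1$ while the remaining terms land in $\Bbbk\langle y_2,y_3\rangle$, so those terms vanish, and one repeats with $z_2$ and $z_3$ until only the zero polynomial survives. You instead verify that the domain of $\Phi$ is itself a quantum affine $3$-space with PBW basis $\{z_1^iz_2^jz_3^k\}$, compute $\Phi$ on that basis to get $y_1^{mi}y_2^{nj}y_3^{lk}$ with no reordering needed, and conclude from the injectivity of $(i,j,k)\mapsto(mi,nj,lk)$ that the basis maps to a linearly independent subset of $P_{\hbar}$. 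Both arguments hinge on the same structural fact --- the $w_i$ are pure powers of distinct variables --- but yours buys slightly more: it identifies the image of $\Phi$ explicitly as the span of the monomials $y_1^{mi}y_2^{nj}y_3^{lk}$ and makes the Hilbert-series computation used later in Theorem 5.5 transparent, whereas the paper's version leans on the reader having internalized the LHS/RHS argument of the previous lemma. Your verification that the commutator table for Case 3 encodes exactly the $q$-commutation relations $z_2z_1=q^{mn}z_1z_2$, etc.\ (with $q=\tfrac{1-\hbar}{1+\hbar}$) is a worthwhile sanity check that the paper leaves implicit; the only cosmetic caveat is that the denominator in the paper's entry for $[w_3,w_1]$ should read $(1-\hbar)^{ml}$, consistent with your expansion.
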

\begin{proof}
This lemma becomes immediate if we repeat the former half of the proof of Lemma 5.2 (LHS, RHS argument) three times, because in this case, $z_1 \mapsto y_1^m$, $z_2 \mapsto y_2^n$, and $z_3 \mapsto y_3^l$.
\end{proof}

\smallskip

\begin{lem}
For Case 8, the $\Bbbk$-algebra homomorphism $\Phi$ is surjective.
\end{lem}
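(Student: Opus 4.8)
The plan is to pass to a linear change of generating variables that diagonalizes the $\mathbb{Z}_2$-action of $G_{\hbar}$, after which surjectivity of $\Phi$ reduces to a transparent bookkeeping of normal-form monomials. In Case 8 the nontrivial element $g$ of $G_{\hbar} = \mathbb{Z}_2$ is $g\big\vert_{(P_{\hbar})_1} = \begin{bmatrix}-1 & 0 & 0\\ 0 & 1 & 0\\ 2 & 0 & 1\end{bmatrix}$ by Proposition 3.3, so $g(y_1) = -y_1$, $g(y_2) = y_2$, $g(y_3) = 2y_1 + y_3$. First I would set $u_1 = y_1$, $u_2 = y_2$, $u_3 = y_1 + y_3$. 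This is an invertible linear substitution (with $y_3 = u_3 - u_1$), and a direct check gives $g(u_1) = -u_1$, $g(u_2) = u_2$, $g(u_3) = u_3$; thus in the new coordinates $g$ acts diagonally, negating $u_1$ and fixing $u_2, u_3$. Crucially, in these variables $w_1 = u_2$, $w_2 = u_3$, and $w_3 = u_1^2$, so the image of $\Phi$ is exactly the subalgebra $\langle u_1^2, u_2, u_3\rangle \subseteq P_{\hbar}$.

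Next I would re-express the defining relations of $P_{\hbar}$ (Case 8 of the relations table) in terms of $u_1, u_2, u_3$. Substituting $y_3 = u_3 - u_1$ and simplifying, I expect the $u_1u_2$ and $u_1^2$ cross-terms to collapse, leaving the clean relations
\[
u_2 u_1 = q\,u_1 u_2, \qquad u_3 u_1 = q^{-1} u_1 u_3, \qquad u_3 u_2 = q\,u_2 u_3 - \tfrac{6\hbar}{2+\hbar}\,u_1^2, \qquad q = \tfrac{2-\hbar}{2+\hbar}.
\]
These exhibit $P_{\hbar}$ as the iterated Ore extension $\Bbbk_q[u_1,u_2][u_3;\sigma,\delta]$, where $\sigma(u_1) = q^{-1}u_1$, $\sigma(u_2) = qu_2$, $\delta(u_1) = 0$, $\delta(u_2) = -\tfrac{6\hbar}{2+\hbar}u_1^2$; after checking that $\sigma$ is an automorphism and $\delta$ a $\sigma$-derivation of the quantum plane $\Bbbk_q[u_1,u_2]$, the relations yield a surjection from this Ore extension onto $P_{\hbar}$. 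Since the Ore extension carries the basis $\{u_1^iu_2^ju_3^k\}$ and $\dim_{\Bbbk}(P_{\hbar})_d = \binom{d+2}{2}$ by the $y$-basis of \cite[page 254]{DML}, a graded surjection between spaces of equal dimension in each degree is an isomorphism; hence $\{u_1^iu_2^ju_3^k : i,j,k \geq 0\}$ is a $\Bbbk$-basis of $P_{\hbar}$, on which $g$ acts by $g(u_1^iu_2^ju_3^k) = (-1)^i u_1^iu_2^ju_3^k$.

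With the diagonal action established, the invariant subalgebra is immediate: $P_{\hbar}^{G_{\hbar}}$ is precisely the span of the basis monomials with $i$ even, i.e. $P_{\hbar}^{G_{\hbar}} = \bigoplus_{i\text{ even},\,j,k\geq 0}\Bbbk\,u_1^iu_2^ju_3^k$. To finish, I would observe that for each such monomial $u_1^{2m}u_2^ju_3^k$ one has $\Phi(z_3^m z_1^j z_2^k) = w_3^m w_1^j w_2^k = (u_1^2)^m u_2^j u_3^k = u_1^{2m}u_2^ju_3^k$, the last equality holding because $(u_1^2)^m u_2^j u_3^k$ is already in the normal form $u_1^{\bullet}u_2^{\bullet}u_3^{\bullet}$ and therefore equals the corresponding basis element. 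Thus every basis element of $P_{\hbar}^{G_{\hbar}}$ lies in the image of $\Phi$, proving surjectivity.

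I expect the only genuine obstacle to be the second step: confirming that the cross-terms cancel so that the relations close up into the stated $q$-commutation/Ore form. If they did not cancel, one could no longer read off a basis in the ordered monomials $u_1^iu_2^ju_3^k$, and the final identity $\Phi(z_3^m z_1^j z_2^k) = u_1^{2m}u_2^ju_3^k$ would acquire correction terms. The cancellation is forced, however, by the fact that $P_{\hbar}$ is already a quantum polynomial ring of Hilbert series $\tfrac{1}{(1-t)^3}$, so the ordered $u$-monomials span a space of the correct dimension in each degree and must form a basis; this reduces the verification to the finite, routine computation of the three reordering relations above.
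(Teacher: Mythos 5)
Your proof is correct, and it takes a genuinely different route from the paper. The paper argues top-down: it invokes Theorem 4.1 together with a structural result on Artin--Schelter regular algebras of dimension $3$ (\cite[Proposition 1.1]{S}) to conclude that $P_{\hbar}^{G_{\hbar}}$ is generated by three elements subject to three relations, and then checks that $w_1, w_2, w_3$ form a minimal generating set, whence $\Phi$ is surjective. You argue bottom-up: the substitution $u_1 = y_1$, $u_2 = y_2$, $u_3 = y_1 + y_3$ diagonalizes the $\mathbb{Z}_2$-action, and the Case 8 relations do collapse exactly as you predict --- with $q = \frac{2-\hbar}{2+\hbar}$ one checks $1 + \frac{2\hbar}{2-\hbar} = q^{-1}$ and $1 - \frac{8\hbar}{(2+\hbar)^2} = q^2$, giving $u_2u_1 = qu_1u_2$, $u_3u_1 = q^{-1}u_1u_3$, $u_3u_2 = qu_2u_3 - \frac{6\hbar}{2+\hbar}u_1^2$ --- so the ordered monomials $u_1^iu_2^ju_3^k$ span each graded piece and are a basis by the dimension count $\binom{d+2}{2}$. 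Since $g$ acts on this basis by $(-1)^i$, the invariants are spanned by monomials with $i$ even, each of which is visibly $\Phi(z_3^mz_1^jz_2^k) = (u_1^2)^mu_2^ju_3^k$. Your approach is more elementary and self-contained: it avoids leaning on Theorem 4.1 and the citation to \cite{S}, and it yields an explicit $\Bbbk$-basis of $P_{\hbar}^{G_{\hbar}}$ as a bonus (in effect it also reproves injectivity and the Hilbert series computation used later in Theorem 5.5). What the paper's argument buys is brevity and a template that would apply even when no variable change diagonalizes the group action or no PBW basis is readily available. The one presentational caveat is that your ``fallback'' dimension argument does not by itself replace verifying the three reordering relations --- spanning by \emph{ordered} monomials is exactly what those relations guarantee --- but you acknowledge this and the verification is the finite computation recorded above, so there is no gap.
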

\begin{proof}
According to Theorem 4.1 and \cite[Proposition 1.1]{S}, the invariant subalgebra $P_{\hbar}^{G_{\hbar}}$ has either two generators subjecting to two homogeneous relations or three generators subjecting to three homogeneous relations. We fall into the latter category because the elements $w_1$, $w_2$, $w_3$ cannot be minimally generated by two elements in $P_{\hbar}$, and thus, not in the invariant subalgebra $P_{\hbar}^{G_{\hbar}}$ either. Furthermore, the elements $w_1$, $w_2$, $w_3$ form a minimal set of generators of $P_{\hbar}^{G_{\hbar}}$ because:
\begin{itemize}
    \item the elements $w_1$, $w_2$, $w_3$ are contained in $P_{\hbar}^{G_{\hbar}}$,

    \item the linear polynomials $w_1$, $w_2$ are linearly independent,

    \item no linear elements apart from linear combinations of $w_1$ and $w_2$ is contained in $P_{\hbar}^{G_{\hbar}}$,

    \item the quadratic polynomial $w_3$ cannot be generated by $w_1$ and $w_2$.
\end{itemize}

Given that $w_1, w_2, w_3$ lie in the image of $\Phi$, the $\Bbbk$-algebra homomorphism $\Phi$ is surjective.
\end{proof}

\smallskip

We now turn to the proof of Theorem 0.2:

\begin{thm}
Let $P = \Bbbk[x_1, x_2, x_3]$ be a unimodular quadratic Poisson algebra. Let $G$ be a finite subgroup of the graded Poisson automorphism group of $P$ and let $G_{\hbar}$ be the corresponding finite subgroup of the graded automorphism group of $P_{\hbar}$ under the isomorphism $\text{PAut}_{\text{gr}}(P) \cong \text{Aut}_{\text{gr}}(P_{\hbar})$. Define $Q_{\hbar} := P_{\hbar}^{G_{\hbar}}$, with $\hbar$ viewed as a formal parameter (as opposed to a scalar value). Then 
\begin{enumerate}[label = (\arabic*)]
    \item $Q_{\hbar}/(\hbar) \cong P^{G}$ as $\Bbbk$-algebras.

    \item For all $f, g \in Q_{\hbar}$, $fg - gf = \hbar \pi_1(f,g)$ for some $\pi_1(f,g) \in Q_{\hbar}$.

    \item $Q_{\hbar}/(\hbar)$ together with the following Poisson bracket:
    \[
    \{\overline{f}, \overline{g}\} = \overline{\pi_1(f,g)},
    \]
    where $\overline{(\hspace{.1in})}$ denotes the image under the natural projection $Q_{\hbar} \to Q_{\hbar}/(\hbar)$, is isomorphic to $P^G$ as Poisson algebras. 
\end{enumerate}
\[
\begin{tikzcd}
\rotatebox{0}{$P$} \arrow{rrrrr}{\rotatebox{0}{deformation quantization}} \arrow[swap]{ddd}{\hspace*{-0.5cm}\rotatebox{90}{invariant}} & & & & & \rotatebox{0}{$P_{\hbar}$}
\arrow{ddd}{\hspace*{0.1cm}\rotatebox{270}{invariant}}\\
\\
\\
\rotatebox{0}{$P^{G}$} & & & & & \rotatebox{0}{$P_{\hbar}^{G_{\hbar}}$} 
\arrow{lllll}{\hspace*{0.2cm}\rotatebox{0}{semiclassical limit}}
\end{tikzcd}
\]
\end{thm}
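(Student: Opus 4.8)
My plan is to derive all three parts from two structural facts about the formal deformation quantization. First, the reduction map $\rho\colon P_{\hbar}\to P_{\hbar}/\hbar P_{\hbar}\cong P$ is a surjection of $\Bbbk$-algebras that is \emph{equivariant}, meaning $\rho\circ\sigma_{\hbar}=\sigma\circ\rho$ for each corresponding pair $\sigma_{\hbar}\in G_{\hbar}$, $\sigma\in G$; this holds on generators since $\rho(y_i)=x_i$ and $G_{\hbar},G$ are given by the same matrices under $\text{PAut}_{\text{gr}}(P)\cong\text{Aut}_{\text{gr}}(P_{\hbar})$, hence everywhere by multiplicativity. Second, $P_{\hbar}\cong P[[\hbar]]$ is $\hbar$-torsion-free. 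From torsion-freeness I would first extract the auxiliary identity $\hbar P_{\hbar}\cap Q_{\hbar}=\hbar Q_{\hbar}$: if $f=\hbar h\in Q_{\hbar}$ then $\hbar\sigma_{\hbar}(h)=\hbar h$ forces $\sigma_{\hbar}(h)=h$ for every $\sigma_{\hbar}\in G_{\hbar}$, so $h\in Q_{\hbar}$.

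For part (1) I would prove that $\rho$ restricts to an isomorphism $\bar\rho\colon Q_{\hbar}/(\hbar)\xrightarrow{\sim}P^{G}$. Equivariance gives $\rho(Q_{\hbar})\subseteq P^{G}$, and the auxiliary identity shows $\ker(\rho|_{Q_{\hbar}})=\hbar Q_{\hbar}$, so $\bar\rho$ is injective. Surjectivity I would obtain from the Reynolds operator: given $p\in P^{G}$, pick any lift $\tilde p\in P_{\hbar}$ and set $\hat p=\tfrac{1}{|G|}\sum_{\sigma_{\hbar}\in G_{\hbar}}\sigma_{\hbar}(\tilde p)\in Q_{\hbar}$; then $\rho(\hat p)=\tfrac{1}{|G|}\sum_{\sigma}\sigma(p)=p$ since $p$ is already $G$-invariant (legitimate as $\operatorname{char}\Bbbk=0$ and $G$ is finite). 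This argument is uniform over all eight cases. Alternatively one may run the explicit route supported by Lemmas 5.1–5.4: upgrade $\Phi$ to an isomorphism by matching its PBW Hilbert series $\prod_i(1-t^{\deg w_i})^{-1}$ against the Molien series of $P_{\hbar}^{G_{\hbar}}$ computed as in Theorem 4.1, and then observe that every relation $[w_i,w_j]$ in the table carries a factor of $\hbar$, so reducing modulo $\hbar$ collapses the presentation to the commutative ring on $\overline{w_1},\overline{w_2},\overline{w_3}$, which the classical reflection computation of \cite{Ma1} identifies with $P^{G}$.

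Parts (2) and (3) then follow formally. For (2), commutativity of $P=P_{\hbar}/\hbar P_{\hbar}$ gives $fg-gf\in\hbar P_{\hbar}$ for $f,g\in Q_{\hbar}$, so torsion-freeness produces a unique $\gamma\in P_{\hbar}$ with $fg-gf=\hbar\gamma$; applying $\sigma_{\hbar}$ and using invariance of $f,g$ yields $\hbar\sigma_{\hbar}(\gamma)=\hbar\gamma$, hence $\sigma_{\hbar}(\gamma)=\gamma$, so $\pi_1(f,g):=\gamma\in Q_{\hbar}$. For (3), $\{\overline f,\overline g\}=\overline{\pi_1(f,g)}$ is the semiclassical limit of $Q_{\hbar}$ and hence a Poisson bracket; I would then check that the isomorphism $\bar\rho$ of part (1) is Poisson. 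Indeed, defining property (3) of the deformation quantization gives $\{\rho(f),\rho(g)\}_{P}=\rho(\gamma(f,g))=\rho(\pi_1(f,g))$, while $\bar\rho(\overline{\pi_1(f,g)})=\rho(\pi_1(f,g))$ as well; since the Poisson structure on $P^{G}$ is that of $P$ restricted to invariants, $\bar\rho$ intertwines the two brackets and the square commutes.

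The step I expect to be most delicate is making the algebra isomorphism of part (1) \emph{literally} the projection $\rho$, so that part (3) becomes automatic rather than a separate bracket computation. If one instead proves part (1) through $\Phi$, one must additionally verify that $\rho\circ\Phi$ carries $z_1,z_2,z_3$ to the classical generators of $P^{G}$ and that the resulting identification coincides with $\bar\rho$, and one must reconcile the ``$\hbar$ formal'' reduction used here with the ``$\hbar$ scalar'' computations that produced the relation table. A related technical point is that injectivity and surjectivity of $\Phi$ are established only in complementary cases (Lemmas 5.2–5.3 versus Lemma 5.4), so upgrading to an isomorphism forces the Hilbert-series comparison to be carried out case by case, tracking the degrees $\deg w_i$. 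The Reynolds-operator argument sidesteps both difficulties and is uniform across the eight cases, so I would use it for the structural content and invoke the explicit table only to exhibit $\pi_1$ on generators concretely.
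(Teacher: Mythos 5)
Your argument is correct, but it is genuinely different from the paper's. The paper proves this theorem by running through the eight explicit $(P,G)$ combinations: it uses the generators $w_1,w_2,w_3$ and the homomorphism $\Phi$ of Lemmas 5.1--5.4, upgrades injectivity (resp.\ surjectivity) to an isomorphism by matching the PBW Hilbert series $\prod_i(1-t^{\deg w_i})^{-1}$ against the Molien series of $P_{\hbar}^{G_{\hbar}}$, treats (1) and (2) as clear, and then verifies (3) by dividing each tabulated relation $[w_i,w_j]$ by $\hbar$ and comparing the resulting bracket table with \cite[Theorem 3.1]{Ma1} up to relabeling. You instead argue uniformly: equivariance of the reduction $\rho\colon P_{\hbar}\to P$ on generators, $\hbar$-torsion-freeness of $P_{\hbar}\cong P[[\hbar]]$ (which is exactly condition (1) in the paper's definition of a deformation quantization, supplied by the Donin--Makar-Limanov PBW basis), the identity $\hbar P_{\hbar}\cap Q_{\hbar}=\hbar Q_{\hbar}$, and the Reynolds operator in characteristic $0$; part (3) then drops out of the defining property $\{\overline f,\overline g\}=\overline{\gamma(f,g)}$ together with uniqueness of $\gamma$, once one notes that $P^{G}$ is a Poisson subalgebra of $P$ because $G$ acts by Poisson automorphisms. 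Your route is shorter, avoids the Hilbert-series and relation-table computations entirely, and — notably — covers an arbitrary finite subgroup $G$ as the theorem statement actually requires, whereas the paper's proof leans on the eight-case classification of \emph{reflection} groups; it also generalizes beyond dimension $3$ and unimodularity. What the paper's route buys in exchange is the explicit presentation of $Q_{\hbar}$ and of its semiclassical limit by generators and relations, which is what feeds the Artin--Schelter regularity statement of Theorem 4.1 and identifies $P^{G}$ concretely. The only points you should make explicit are that the $G_{\hbar}$-action is $\Bbbk[[\hbar]]$-linear (true, since the corresponding matrices have entries in $\Bbbk$) and that the Poisson structure on $P^{G}$ in the theorem is the one restricted from $P$, so that your $\bar\rho$ lands in the right Poisson algebra.
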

\begin{proof}
By Lemma 5.2, Lemma 5.3, Lemma 5.4, the $\Bbbk$-algebra homomorphism $\Phi$ is either injective or surjective. In either case, complete $\Phi$ to a short exact sequence by adding a cokernel (injective) or a kernel (surjective). Assign a degree to $z_i$ such that $\deg(z_i) = \deg(w_i)$, for $1 \leq i \leq 3$. It is then straightforward to observe that the commutator relations are homogeneous and $\Phi$

the $\Bbbk$-algebra homomorphism $\Phi$ is graded. For Case 1 - Case 8, the domain of $\Phi$ admits a $\Bbbk$-linear basis $\{z_1^iz_2^jz_3^k: i, j, k \geq 0\}$, leading to a Hilbert series of $\displaystyle{\frac{1}{(1-t^{\deg(w_1)})(1-t^{\deg(w_2)})(1-t^{\deg(w_3)})}}$. By applying Molien's Theorem, it can be deduced that the Hilbert series of the codomain of $\Phi$, namely $P_{\hbar}^{G_{\hbar}}$, is also given by $\displaystyle{\frac{1}{(1-t^{\deg(w_1)})(1-t^{\deg(w_2)})(1-t^{\deg(w_3)})}}$. By addivity of the Hilbert series, $\Phi$ is a $\Bbbk$-algebra isomorphism. 

Let $Q_{\hbar} := P_{\hbar}^{G_{\hbar}}$, with $\hbar$ viewed as a formal parameter instead of a scalar value. One can scrutinize the conditions outlined in Theorem 5.1. It is clear that (1) and (2) hold. For (3),
\[
\renewcommand{\arraystretch}{1.25}
\begin{tabular}{ |p{1cm}||p{4.5cm}|p{4.5cm}|p{4.5cm}| } 
\hline
\centering \textbf{Case} &
\centering $\boldsymbol{[w_1, w_2]}$ &
\centering $\boldsymbol{[w_2, w_3]}$ & 
\centering $\boldsymbol{[w_3, w_1]}$
\tabularnewline
\hline
\hline
\centering 1 & 
\centering $0$ & 
\centering $3w_1$ & 
\centering $0$
\tabularnewline
\hline
\centering 2 & 
\centering $\frac{\xi_{n_1}-1}{a_1}z_1^2$ & 
\centering $\frac{2l(\xi_{n_1}-1)}{a_1}z_1z_3$ & 
\centering 0 \tabularnewline
\hline
\centering 3 & 
\centering $2mnz_1z_2$ & 
\centering $2nlz_2z_3$ & 
\centering $2mlz_1z_3$ \tabularnewline
\hline
\centering 4 & 
\centering $-2z_1^2 - 12z_3$ & 
\centering $-2z_1z_3$ & 
\centering $0$ \tabularnewline
\hline
\centering 5 & 
\centering $z_1^2-3z_3$ & 
\centering $4z_1z_3$ & 
\centering $0$ \tabularnewline
\hline
\centering 6 & 
\centering $-z_1^2+3z_3$ & 
\centering $-4z_1z_3$ & 
\centering $0$ \tabularnewline
\hline
\centering 7 & 
\centering $z_3$ & 
\centering $0$ & 
\centering $-6z_2^2$ \tabularnewline
\hline
\centering 8 & 
\centering $z_1z_2 + 3z_3$ & 
\centering $2z_2z_3$ & 
\centering $2z_1z_3$ \tabularnewline
\hline
\end{tabular}
\]

Comparing with \cite[Theorem 3.1]{Ma1}, the Poisson algebra $Q_{\hbar}$ is isomorphic to $P^{G}$ as Poisson algebras, after suitable re-labelings of $z_1$, $z_2$, $z_3$. 
\end{proof}

\

\section{Future Work}
In this section, we reflect on some interesting insights that emerged in the proofs and propose several possible problems for future research.

\vspace{.025in}

\begin{quest}
Let $P = \Bbbk[x_1, x_2, x_3]$ be a non-unimodular quadratic Poisson algebra and let $P_{\hbar}$ be its standard deformation quantization. Is Proposition 3.3 necessarily true? To be more explicit, does $\text{QR}(P_{\hbar})$ necessarily contain no mystic reflection? Our proof for Proposition 3.3 is strongly associated with the fact that most unimodular Poisson structures satisfies $\text{Tr}_{P_{\hbar}}(\phi,t) = \displaystyle{\frac{1}{\det\left(I_3-t(\phi\big\vert_{(P_{\hbar})_1})\right)}}$ for all $\phi \in \text{Aut}_{\text{gr}}(P_{\hbar})$. However, there appears to be no rationale to substantiate such equality for a generic Poisson structure.
\end{quest}

\vspace{.025in}

\begin{quest}
Let $P = \Bbbk[x_1, x_2, x_3]$ be a non-unimodular quadratic Poisson algebra and let $P_{\hbar}$ be its standard deformation quantization. As a continuation of Question 6.1, do Theorem 4.1 and Theorem 5.1 still hold? 
\end{quest}

\vspace{.025in}

\begin{quest}
Let $P = \Bbbk[x_1, x_2, x_3]$ be a quadratic Poisson algebra. Throughout this paper, we have exclusively worked with the deformation quantization as presented in \cite{DML} with a minor alternation. If $P$ admits another non-isomorphic deformation quantization, do Theorem 4.1 and Theorem 5.1 still hold? If not, under what conditions on the deformation quantization do Theorem 4.1 and Theorem 5.1 hold?
\end{quest}

\vspace{.025in}

\begin{quest}
Let $P = \Bbbk[x_1, x_2, x_3]$ be a quadratic Poisson algebra. As a continuation of Question 6.3, is it possible to classify the deformation quantization of $P$? Is there a set of conditions, in particular, a set of homological conditions related to the second and third Poisson cohomology $PH^2$ and $PH^3$ that control non-isomorphic deformation quantizations of $P$?
\end{quest}

\

\

\bibliographystyle{alpha}
\bibliography{References}

\end{document}